\theoremstyle{plain}
\newtheorem{thm}{Theorem}[section]
\newtheorem{lem}[thm]{Lemma}
\newtheorem{prop}[thm]{Proposition}
\newtheorem{cor}[thm]{Corollary}
\newtheorem{de}[thm]{Definition}
\begin{document}
	
\title[The divergence of Mock Fourier series for spectral measures]
	{The divergence of Mock Fourier series for spectral measures}
	
\author{Wu-yi Pan and Wen-hui Ai$^{\mathbf{*}}$}

\address{Key Laboratory of Computing and Stochastic Mathematics (Ministry of Education), School of Mathematics and Statistics, Hunan Normal University, Changsha, Hunan 410081, P. R. China}
	
\email{pwyyyds@163.com}
\email{awhxyz123@163.com}	
	
%\date{\today}
	
\keywords{Mock Fourier series, spectral measure, divergence, quarter Cantor measure.}

\thanks{The research is supported in part by the NNSF of China (No. 11831007)}	

\thanks{$^{\mathbf{*}}$Corresponding author}
	
\subjclass[2010]{Primary 28A80, 42A20, 42B05.}
	
\begin{abstract}
In this paper, we study divergence properties of Fourier series on Cantor-type fractal measure, also called Mock Fourier series.  We give a sufficient condition under which the Mock Fourier series for doubling  spectral measure  is divergent on non-zero set. In particularly, there exists an example of the quarter Cantor measure whose Mock Fourier sums is not almost everywhere convergent.
\end{abstract}
	
\maketitle
\section{ Introduction }
	Let $\mu$ be a Borel probability measure on $\mathbb{R}^d$ with compact support. We say that $\mu$ is a spectral measure if there exists a discrete set  $\Lambda \subset \mathbb{R}^d$ such that $E(\Lambda):=\{e^{-2\pi i\lambda \cdot x}: \lambda \in \Lambda\}$ is an orthonormal basis for $L^2(\mu)$. The first singular, nonatomic, spectral measure was constructed by
	Jorgensen and Pedersen \cite{JP98}.  They proved the surprising result that the quarter Cantor measure  $\mu_4$ is  a spectral.  Over twenty years, many other interesting singular spectral measures on 
	 self-affine and Moran fractal sets have been constructed (see \cite{AH14, DHL14, DHL19} and so on).

 Given a spectral measure $\mu$ with spectrum $\Lambda$, for $L^1(\mu)$ function $f$, 
 we define coefficients $c_{\lambda}(f)=\int f(y) e^{-2 \pi i \lambda \cdot y} d \mu(y)$ and the Mock Fourier series $\sum_{\lambda \in \Lambda} c_{\lambda}(f) e^{2 \pi i \lambda \cdot x} .$ There is a natural sequence of finite subsets $\Lambda_{n}$ increasing to $\Lambda$ as $n \rightarrow \infty$, and we define the partial sums of the Mock Fourier series by
 $$
 S_{n}(f)(x)=\sum_{\lambda \in \Lambda_{n}} c_{\lambda}(f) e^{2 \pi i \lambda \cdot x}.
 $$
 We will use $(S_n,\Lambda_n)$ to denote \textit{the Mock Dirichlet summation operator $S_n$ with $\Lambda_n$}.

As analogue to classical Fourier analysis, an extremely natural question is whether
$S_n(f)$ converges to $f$ as $n \to \infty$. The answer has an added piquancy since:
not only does it depend on the determining what the function space $f$ is belonged to, but it also depends critically on how one defines ``convergence''.	
	
\indent  As we all know, for any continuous function, the assertion of uniform convergence of classical Fourier series is wrong \cite{Zy68}. By contrast,	Strichartz \cite{Str06} showed that it is true for some singular continuous spectral measure with standard spectrum. Unfortunately, for given spectral measure, different spectrum may have different convergence. Dutkay et al. \cite{DHS14} proved there is a continuous function $f$ whose $(S_n(f),\Lambda_n)$ does not even converge pointwise to $f$ for standard spectrum 
$\Lambda=\bigcup_{i=0}^{\infty}\Lambda_n$.

 In this paper, we study divergence properties of doubling spectral measures. Let $(M, \rho)$ be a metric space and suppose that $\mu$ is a positive locally finite Borel measure on $M$. We call $\mu$ a doubling measure if $\mu$ satisfies the doubling condition
\[
\mu(B(x,2r)) \leq A_1 \mu(B(x,r))<\infty \] 
for all $x\in M$ and $r>0$, where $A_1$ is constant and independent of $x,r$. Here $B(x,r)$ denotes the closed ball $B(x,r)=\{y\in M:\rho(y,x)\leq r\}$. 

To conveniently state our main results, we just briefly introduce the related concepts, and their well-posedness will be given in Section 2. Recall the finite discrete measure defined on a measure space $(X,\mathcal{A},\mu)$ has the form
$\nu = \frac{1}{N}\sum_{i=1}^N\delta_{x_i}$  for every finite collection $x_1,x_2,\ldots x_N\in X$ not necessarily pairwise different points, where $\delta_{x}$ is the Dirac measure concentrated at the point $x\in X$. Given a Mock Dirichlet summation operator $S_n$ with $\Lambda_n$, we formally write 
\[
	S_n(\nu)(x) = \frac{1}{N}\sum_{\lambda \in \Lambda_{n}}\sum_{n=1}^N  e^{2 \pi i \lambda \cdot (x-x_n)}.
\]
Now we state our main result. 

\begin{thm}\label{thm1.1}
 Assume $\mu$ is a doubling spectral  measure. Let $(S_n,\Lambda_n)$ be the Mock Dirichlet summation operator and   if
\[
\lim_{\alpha \to \infty}\sup_{\nu = \frac{1}{N}\sum_{i=1}^N\delta_{x_i}}\mu(\{x\in X:\sup_{n}\vert S_n (\nu)(x)\vert >\alpha\})>0,
\]
then there exists an integrable function such that the Mock Fourier  series diverges on $\mu$-non-zero set.
\end{thm}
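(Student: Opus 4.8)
The plan is to realize this as a failure-of-convergence result via the uniform boundedness principle (Banach–Steinhaus), applied to the family of Mock Dirichlet operators acting on an appropriate function space. The hypothesis is phrased in terms of discrete measures $\nu = \frac{1}{N}\sum \delta_{x_i}$, so the first conceptual step is to interpret the quantity $S_n(\nu)$ as a regularized/approximate evaluation of $S_n$ against the counting measure, and to recognize that a discrete measure is the limit (in a suitable weak sense) of honest $L^1(\mu)$ functions. Concretely, I would approximate each Dirac mass $\delta_{x_i}$ by an $L^1(\mu)$-normalized bump $f_{i,\varepsilon}$ supported on a small ball $B(x_i,\varepsilon)$, so that $\nu$ is approximated by $f_\varepsilon = \frac{1}{N}\sum_i f_{i,\varepsilon}$ with $\|f_\varepsilon\|_{L^1(\mu)} = 1$. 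The role of the \emph{doubling} hypothesis is exactly to control these approximations uniformly: doubling gives regularity of $\mu$ on balls, so that the normalized bumps behave well and $S_n(f_\varepsilon)(x) \to S_n(\nu)(x)$ in the relevant pointwise/maximal sense as $\varepsilon \to 0$.

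Next I would set up the quantitative obstruction. The hypothesis says that, taking the supremum over all such discrete measures, the $\mu$-measure of the set where the maximal function $S^*(\nu)(x) := \sup_n |S_n(\nu)(x)|$ exceeds $\alpha$ stays bounded below by a fixed positive constant as $\alpha \to \infty$. Transferring this to the approximating functions $f_\varepsilon$, I would conclude that the maximal operator $f \mapsto \sup_n |S_n(f)|$ is \emph{not} of weak type $(1,1)$ — indeed it fails even a weak-type estimate with any finite constant, since we can find unit-norm $L^1(\mu)$ inputs whose maximal functions have a distribution function that does not decay as $\alpha \to \infty$ on a set of fixed positive measure. This is the precise quantitative statement that drives the divergence.

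From the failure of the weak-type bound I would invoke a Nikishin–Stein type maximal principle: for a sequence of operators on $L^1(\mu)$ commuting (or covariant) with the relevant structure, if the maximal operator is not of weak type $(1,1)$, then there exists a single $f \in L^1(\mu)$ for which $\sup_n |S_n(f)(x)| = \infty$ on a set of positive $\mu$-measure, hence $S_n(f)(x)$ fails to converge there. The passage from ``fails weak type along a supremizing sequence'' to ``a single bad $f$'' is the standard linearization/gliding-hump or closed-graph argument, and it is here that the uniform positive lower bound in the hypothesis (rather than a mere unbounded-norm statement) is essential: it guarantees the divergence set has positive measure rather than being merely nonempty.

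The step I expect to be the main obstacle is the rigorous approximation of the discrete measure $\nu$ by $L^1(\mu)$ functions in a way that transfers the maximal inequality \emph{uniformly in $n$ and in the number of points $N$}. The Mock Dirichlet kernels $\sum_{\lambda \in \Lambda_n} e^{2\pi i \lambda \cdot t}$ need not be locally integrable against $\mu$ in any benign way, and interchanging the supremum over $n$ with the limit $\varepsilon \to 0$ requires care; I anticipate that the doubling property, together with the explicit structure of the spectrum $\Lambda_n$ (e.g.\ controlling the modulus of continuity of the exponentials at scale $\varepsilon$ relative to $\mu(B(x_i,\varepsilon))$), is exactly what makes this interchange legitimate, and I would discharge it before applying Banach–Steinhaus.
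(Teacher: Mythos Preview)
Your proposal is essentially correct and follows the same architecture as the paper: transfer the hypothesis from discrete measures to $L^1(\mu)$ functions via approximation, deduce that the maximal operator fails the corresponding distributional bound, and invoke a maximal principle to produce a single bad $f$. The paper packages the last step as a direct consequence of Guzm\'an's theorem (continuity of the maximal operator at $0$ in $L^0(\mu)$), which is the same circle of ideas as the Nikishin--Stein/Banach--Steinhaus principles you cite.

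Two specific technical devices in the paper are worth flagging, since they resolve exactly the obstacle you identify. First, the interchange of $\sup_n$ with the approximation $\varepsilon \to 0$ is handled not by exploiting any special structure of $\Lambda_n$, but simply by \emph{truncating} to the finite maximal operator $K^*_N f = \max_{1\le n\le N}|S_n f|$; since each Mock Dirichlet kernel is a finite trigonometric sum (hence uniformly continuous with compact support), the approximation of Dirac masses by bumps is uniform for each fixed $N$, and one then lets $N\to\infty$ using monotonicity of the distribution function. Second, rather than raw balls, the paper uses Christ's dyadic cubes on doubling spaces, which provide a nested family of sets whose indicator functions span a dense subspace of $L^1(\mu)$; this makes the passage from ``finite positive combinations of Dirac masses'' through ``simple functions on cubes'' to ``general $L^1$ functions'' clean and avoids any Lebesgue differentiation issues. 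Your plan with balls would also work, but the dyadic-cube route is what doubling is actually used for here.
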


As a corollary, we find the example in \cite{DHS14} diverges on non-zero set.
\begin{cor}\label{cor1.2}
Let $\mu_{4}$ be quarter Cantor measure and suppose $(S_n,17\Lambda_n)$ is the Mock Dirichlet summation operator with  
\[
17\Lambda_n =\{17\sum_{j=0}^{n} 4^j l_j: \; l_j \in \{0,1\},\; n\in \mathbb{N}\}.
\]
Then there exists an integrable function $f\in L^{1}(\mu_{4})$ whose Mock Fourier series $S_n(f)(x)$ diverges on a nonzero set.
\end{cor}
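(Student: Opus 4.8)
The plan is to deduce Corollary~\ref{cor1.2} from Theorem~\ref{thm1.1}, so the work is to check that $\mu_4$ together with the dilated spectrum $17\Lambda$ satisfies the two hypotheses of that theorem. First I would record that $\mu_4$ is the self-similar measure for the maps $x\mapsto x/4,\ x\mapsto (x+2)/4$ with equal weights; on its support it is Ahlfors $\tfrac12$-regular ($\mu_4(B(x,r))\asymp r^{1/2}$ for $x\in\operatorname{supp}\mu_4$), hence doubling in the sense of Section~2. That $\Lambda=\{\sum_{j}4^jl_j:l_j\in\{0,1\}\}$ is a spectrum is the Jorgensen--Pedersen theorem \cite{JP98}, and that the dilate $17\Lambda$ is again a spectrum for $\mu_4$ is exactly the fact isolated in \cite{DHS14} (the specific arithmetic of $17=4^2+1$ is what keeps $E(17\Lambda)$ orthonormal and complete). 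Thus only the maximal condition remains.

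For that condition I would first compute the Mock Dirichlet kernel. Because $17\Lambda_n=\{17\sum_{j=0}^n4^jl_j:l_j\in\{0,1\}\}$ is a full set of $\{0,1\}$-sums, the kernel factors:
\[
D_n(t):=\sum_{\lambda\in17\Lambda_n}e^{2\pi i\lambda t}=\prod_{j=0}^n\bigl(1+e^{2\pi i\,17\cdot4^{\,j}t}\bigr),\qquad |D_n(t)|=\prod_{j=0}^n\bigl|2\cos(17\pi 4^{\,j}t)\bigr|.
\]
Testing the hypothesis on the single point mass $\nu=\delta_0$ gives $S_n(\nu)=D_n$, so it suffices to prove that $\sup_n|D_n(x)|=\infty$ on a set of positive $\mu_4$-measure; then $\mu_4(\{x:\sup_n|D_n(x)|>\alpha\})$ stays bounded below for every $\alpha$ and Theorem~\ref{thm1.1} applies.

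The heart of the matter is the growth of $\prod_j|2\cos(17\pi4^jx)|$ along the support. Writing $x=\sum_{k\ge1}2a_k4^{-k}$ with $a_k\in\{0,1\}$ and using $17=4^2+1$, one reduces the $j$-th factor modulo its period:
\[
17\cdot4^{\,j}x\equiv v_j+v_{j+2}\pmod 2,\qquad v_j:=\sum_{m\ge1}2a_{j+m}4^{-m}\in[0,\tfrac23],
\]
so each $\bigl|2\cos(17\pi4^{\,j}x)\bigr|=\bigl|2\cos(\pi(v_j+v_{j+2}))\bigr|$ depends only on the tail $(a_{j+1},a_{j+2},\dots)$. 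Hence $\log|D_n(x)|=\sum_{j=0}^n\varphi(T^j\omega)$ is a Birkhoff sum, where $\omega=(a_k)$, $T$ is the shift, and $\varphi(\omega)=\log|2\cos(\pi(v_0+v_2))|$ with $v_0=\tfrac{a_1}2+\tfrac{a_2}8+\tfrac{v_2}{16}$. Under $\mu_4$ the digits are i.i.d.\ fair, so $T$ is ergodic and $\tfrac1n\log|D_n(x)|\to\int\varphi\,d\mu_4$ for a.e.\ $x$. If $\int\varphi\,d\mu_4>0$ then $|D_n|\to\infty$ a.e.; if $\int\varphi\,d\mu_4=0$ the mean-zero, non-coboundary Birkhoff sums are recurrent and unbounded above a.e.; in either case $\sup_n|D_n|=\infty$ $\mu_4$-a.e., which is more than enough.

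The main obstacle is precisely the sign of $\int\varphi\,d\mu_4$: a uniform-distribution heuristic gives $\int\varphi\,d\mu_4\approx0$, so the estimate is delicate and must exploit the self-similar identity $v_0=\tfrac{a_1}2+\tfrac{a_2}8+\tfrac{v_2}{16}$ to show the drift is nonnegative. Should this drift turn out to be negative (so that $\delta_0$ fails), I would instead invoke the full strength of the $\sup_\nu$ in Theorem~\ref{thm1.1} and build a multi-point measure $\nu=\frac1N\sum_i\delta_{x_i}$ of Kolmogorov type: choosing the $x_i$ so that the translated kernels $D_n(x-x_i)$ reinforce in phase on a prescribed cylinder set forces $\sup_n\bigl|\frac1N\sum_iD_n(x-x_i)\bigr|$ above any given $\alpha$ on a set of measure bounded below, using the growth of the Lebesgue-type integrals $\int|D_n|\,d\mu_4$ established in \cite{DHS14}. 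This construction is robust to the sign of the drift, and I expect it to be both necessary and the technically hardest step.
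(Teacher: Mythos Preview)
Your approach is essentially the paper's: verify doubling, test a single Dirac mass at a fixed point of the IFS, factor the Dirichlet kernel, and use Birkhoff's ergodic theorem on $\tfrac1n\log|D_n|$ to reduce everything to the sign of the drift $\int\log|m_{17}(\,\cdot\,-\text{fixed point})|\,d\mu_4$. The paper packages this ergodic step as a general lemma (Lemma~\ref{Classification}) valid for any doubling self-affine spectral measure generated by a Hadamard triple, with the criterion $\Delta(m_{\tau,b}):=\exp\int\log|m_\tau(x-(I-R^{-1})^{-1}b)|\,d\mu>1$; your $\delta_0$ is exactly $\delta_{(I-R^{-1})^{-1}b}$ for $b=0$ in your digit convention.

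The one substantive point you are missing is that the sign of the drift is not open: the paper simply cites the numerical computation in \cite[Example~2.5]{DHS14}, which (after a similarity) gives $\Delta(m_{17,b})>1$. So your positive-drift branch already fires, and the zero-drift recurrence argument and the Kolmogorov-type multi-point fallback are both unnecessary. (Incidentally, the zero-drift step as you wrote it is not quite rigorous---recurrence of mean-zero Birkhoff sums does not by itself force $\limsup=+\infty$ without further input such as a CLT or Chung--Fuchs-type hypothesis---but since the drift is strictly positive this is moot.)
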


We organize our paper as follows. In Section 2, we firstly present a brief overview of the the relationship between the continuity of maximal operators and  convergence almost everywhere. Succeeded by, we introduce the main tool for our proof of Theorem \ref{thm1.1}, i.e., the dyadic cube analysis constructed by Christ\cite{Ch90}. In Section 3, as an application of Theorem \ref{thm1.1}, we consider the self-affine mesure generated by Hadamard triple. Under some technical condition about the spectrum, we give a criterion on there exists an integrable function whose Mock Fourier series diverges on non-zero set. The criterion can be applied to cover Corollary \ref{cor1.2}.

\section{The proof of theorem \ref{thm1.1}}\label{SE:2}

Let $(X, \mathcal{A}, \mu)$ be a complete finite  measure space with a  $\sigma$-field $\mathcal{A}$. To recall some basic facts firstly, the space of (equivalence classes of) all measurable functions on$(X, \mathcal{A}, \mu)$ is denoted by $L^{0}(\mu)$. It is endowed with the topology of convergence in measure by the metric
\[
d(f, g) =\int_{X}\frac{|f-g|}{1 + |f-g|}d\mu.
\]
It is not difficult to show that $(L^0(\mu), d)$ is a complete metric space.

\indent A mapping $T : (M, d_1) \longmapsto  (L^0(\mu),d)$ from a metric space $M$ to $L^0(\mu)$ is said to be continuous at $x\in M$, if for any sequence $\{x_n\}\subset M,$ $ n\geq 1$, we have $d(Tx_n, Tx) \longrightarrow 0$ whenever $d_1(x_n, x) \longrightarrow 0$.	We call mapping $T$ is continuous if it is continuous at every point of $M$. 

 We firstly recall the following theorem due to Guzman \cite{Gu81} on the almost everywhere finiteness of maximal operator.

\begin{thm}\cite[p. 10]{Gu81}\label{thm2.1}
Assume $(X, \mathcal{A}, \mu)$ is a complete measure space and $T_k: L^1(\mu) \longmapsto (L^0(\mu), d)$ is a  sequence  of sub-linear  operators with $\mu(X)< \infty$. If each $T_k$ is continuous and that  the  maximal operator $T^*$ defined for $f\in L^1(\mu)$ and $x\in X$ as 
\begin{equation*}
  T^{*}f(x)=\sup_{k}\vert T_k f(x)\vert <\infty \quad \mu- \text{a.e.}
\end{equation*}
 Then $T^*$ is also continuous
at $0$, and therefore
\begin{equation*}
\lim\limits_{\alpha \to \infty}\phi(\alpha ):=\lim\limits_{\alpha  \to \infty}\sup_{\Vert f\Vert_{L^1(\mu)}\leq 1}\mu(\{x\in X:T^{*}f(x)>\alpha \})=0.
\end{equation*}
\end{thm}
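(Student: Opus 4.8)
The plan is to recognize Theorem \ref{thm2.1} as a Banach--Steinhaus (Nikishin--Stein) type uniform boundedness principle, whose engine is the Baire category theorem applied to the complete metric space $L^1(\mu)$, exploiting the positive homogeneity and sublinearity of $T^*$ together with the a.e.-finiteness hypothesis. First I would reduce to the truncated maximal operators $T_N^* f = \max_{1 \le k \le N} \vert T_k f \vert$. Being a finite maximum of operators that are continuous from $L^1(\mu)$ into $(L^0(\mu), d)$, each $T_N^*$ is itself continuous into $L^0(\mu)$; moreover $T_N^* f \uparrow T^* f$ pointwise, so that $\{T^* f > \alpha\} = \bigcup_N \{T_N^* f > \alpha\}$ and hence $\mu(\{T^* f > \alpha\}) = \sup_N \mu(\{T_N^* f > \alpha\})$. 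The hypothesis $T^* f < \infty$ $\mu$-a.e. then reads $\lim_{\alpha \to \infty} \mu(\{T^* f > \alpha\}) = 0$ for each fixed $f$.

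Fix $\eta > 0$ and set $E_\alpha = \{f \in L^1(\mu) : \mu(\{T^* f > \alpha\}) \le \eta\}$ for $\alpha \in \mathbb{N}$. The previous paragraph gives $\bigcup_\alpha E_\alpha = L^1(\mu)$. I would next show each $E_\alpha$ is closed in $L^1(\mu)$; since $L^1(\mu)$ is complete, the Baire category theorem then forces some $E_{\alpha_0}$ to have nonempty interior, say $B(f_0, r) \subset E_{\alpha_0}$. Using subadditivity in the form $T^* g \le T^*(f_0 + g) + T^* f_0$, for every $g$ with $\Vert g \Vert_{L^1(\mu)} < r$ both $f_0 + g$ and $f_0$ lie in $E_{\alpha_0}$, so outside a set of measure at most $2\eta$ one has $T^* g \le 2\alpha_0$; that is,
\[
\mu(\{x : T^* g(x) > 2\alpha_0\}) \le 2\eta \qquad \text{whenever } \Vert g \Vert_{L^1(\mu)} < r.
\]

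The homogeneity $T^*(\lambda g) = \lambda T^* g$ for $\lambda > 0$ converts this local bound into a global one: given $\Vert f \Vert_{L^1(\mu)} \le 1$, apply the displayed inequality to $g = \tfrac{r}{2} f$ to obtain $\mu(\{T^* f > 4\alpha_0/r\}) \le 2\eta$. Taking the supremum over the unit ball yields $\phi(4\alpha_0/r) \le 2\eta$, and since superlevel sets shrink as $\alpha$ grows, $\phi(\alpha) \le 2\eta$ for all $\alpha \ge 4\alpha_0/r$. Running this with $\eta = 1/m$ gives $\limsup_{\alpha \to \infty} \phi(\alpha) \le 2/m$ for every $m$, hence $\lim_{\alpha \to \infty} \phi(\alpha) = 0$. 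Continuity of $T^*$ at $0$ is then immediate from the same homogeneity: if $\Vert f_n \Vert_{L^1(\mu)} \to 0$ then $\mu(\{T^* f_n > \alpha\}) = \mu(\{T^*(f_n/\Vert f_n \Vert_{L^1(\mu)}) > \alpha/\Vert f_n \Vert_{L^1(\mu)}\}) \le \phi(\alpha/\Vert f_n \Vert_{L^1(\mu)}) \to 0$ for each $\alpha$, so $T^* f_n \to 0$ in measure.

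I expect the main obstacle to be the closedness of the sets $E_\alpha$, the only place demanding real care, since $T^*$ is not yet known to be continuous and so one cannot pass to the limit in $f$ directly. I would instead route the argument through the continuous truncations: writing $\Phi_N(f) = \mu(\{T_N^* f > \alpha\})$, one has $E_\alpha = \bigcap_N \{f : \Phi_N(f) \le \eta\}$ because $\mu(\{T^* f > \alpha\}) = \sup_N \Phi_N(f)$. It then suffices to check that each functional $\Phi_N$ is lower semicontinuous on $L^1(\mu)$: if $f_j \to f$ in $L^1(\mu)$ then $T_N^* f_j \to T_N^* f$ in measure, and passing to an a.e.-convergent subsequence a Fatou argument yields $\Phi_N(f) \le \liminf_j \Phi_N(f_j)$. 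Lower semicontinuity makes every sublevel set $\{\Phi_N \le \eta\}$ closed, so $E_\alpha$, being their intersection, is closed. The delicate points are the subsequence extraction needed to turn convergence in measure into the pointwise convergence that Fatou's lemma requires, and the interchange of the supremum over $N$ with the limiting procedure, both of which the decomposition $E_\alpha = \bigcap_N \{\Phi_N \le \eta\}$ is designed to sidestep.
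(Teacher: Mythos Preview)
The paper does not supply a proof of Theorem~\ref{thm2.1}; it is quoted verbatim from \cite[p.~10]{Gu81} and used as a black box to derive Corollary~\ref{thmx:Gu}. So there is no ``paper's own proof'' to compare against.

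That said, your argument is correct and is essentially the one given in Guzm\'an's book: a Banach--Steinhaus/Nikishin-type uniform boundedness principle driven by Baire category on $L^1(\mu)$. The reduction to the truncated maxima $T_N^*$, the decomposition $E_\alpha=\bigcap_N\{\Phi_N\le\eta\}$, and the lower semicontinuity of each $\Phi_N$ via an a.e.-convergent subsequence plus Fatou are exactly the right moves; the subadditivity-plus-homogeneity step turning a ball $B(f_0,r)\subset E_{\alpha_0}$ into a uniform tail bound on the unit ball is standard and you handle it cleanly. One cosmetic remark: the theorem statement phrases the implication as ``continuity of $T^*$ at $0$, and therefore $\phi(\alpha)\to0$,'' whereas you establish $\phi(\alpha)\to0$ first and read off continuity at $0$ afterwards. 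Both conclusions are obtained, so this is only a matter of presentation order.
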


Notice that bounded linear operator is always continuous. By Theorem \ref{thm2.1}, if $\{T_k\}$  is a  sequence  of  bounded linear  operators, then $\lim\limits_{\alpha \to \infty}\phi(\alpha)>0$ implies there exists an integrable function $g$ such that $T_k(g)$ is not almost everywhere convergent.

\begin{cor}\label{thmx:Gu}
	Let $(X,\mathcal{A},\mu)$ be a complete measure space and $T_k: L^1(\mu) \longmapsto (L^0(\mu), d)$ be a  sequence  of bounded linear operators with $\mu(X)< \infty$. If
	\begin{equation}\label{eq:alpha}
			\lim\limits_{\alpha  \to \infty}\sup_{\Vert f\Vert_{L^1(\mu)}\leq 1}\mu(\{x\in X:T^{*}f(x)>\alpha \})>0,
		\end{equation}
	then a function $g$ existing in $L^1(\mu)$ can be obtained that its $T_k(g)$  diverges on $\mu$-non-zero set in $X$.
\end{cor}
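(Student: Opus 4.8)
The plan is to derive Corollary \ref{thmx:Gu} as the logical contrapositive of Theorem \ref{thm2.1}, so that essentially no new analysis is required. First I would verify that every hypothesis of Theorem \ref{thm2.1} \emph{other than} the a.e.\ finiteness of $T^*$ is met by the operators in the corollary: each $T_k$ is linear, hence sublinear, and each bounded linear operator into $(L^0(\mu),d)$ is continuous. This last point is exactly the remark recorded immediately after Theorem \ref{thm2.1}; on a finite measure space norm convergence implies convergence in measure, and $d$ metrizes convergence in measure, so $x_n \to x$ forces $d(T_k x_n, T_k x) \to 0$.

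Next I would set up the contrapositive. Suppose, toward a contradiction, that the desired conclusion fails, i.e.\ that for every $g \in L^1(\mu)$ the sequence $T_k(g)$ converges $\mu$-a.e.\ on $X$. Since a convergent scalar sequence is bounded, this would force $T^* g(x) = \sup_k |T_k g(x)| < \infty$ for $\mu$-a.e.\ $x$, and this for \emph{every} $g \in L^1(\mu)$. All hypotheses of Theorem \ref{thm2.1} would then be in force, so its conclusion would give $\lim_{\alpha \to \infty}\phi(\alpha) = 0$, where $\phi(\alpha) = \sup_{\|f\|_{L^1(\mu)} \le 1}\mu(\{x \in X : T^* f(x) > \alpha\})$. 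This directly contradicts the standing assumption \eqref{eq:alpha} that $\lim_{\alpha \to \infty}\phi(\alpha) > 0$.

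Finally I would read off the stated conclusion from the failure of that assumption: the negation yields a single function $g \in L^1(\mu)$ together with a set $E$ of positive $\mu$-measure on which $T_k(g)$ does not converge. I do not anticipate a genuine obstacle, since Theorem \ref{thm2.1} carries all the analytic weight; the only point deserving care is the passage from ``$T^* g = \infty$ on $E$'' to genuine divergence of the scalar sequence $\{T_k g(x)\}_k$ at points of $E$. This is handled by observing that unboundedness of a complex sequence precludes convergence to any finite limit, so the failure of a.e.\ finiteness of $T^*$ is precisely divergence of $T_k(g)$ on a $\mu$-non-zero set, which is the assertion of Corollary \ref{thmx:Gu}.
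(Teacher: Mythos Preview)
Your proposal is correct and follows exactly the approach the paper takes: the paper's entire proof is the sentence preceding the corollary, namely that bounded linear operators are continuous and then the contrapositive of Theorem~\ref{thm2.1} applies. Your final paragraph about passing from ``$T^*g=\infty$ on $E$'' to divergence is unnecessary and slightly misdirected---the contrapositive you set up in the second paragraph already delivers the conclusion directly---but this does not affect correctness.
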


However, it is difficult to verify \eqref{eq:alpha}, but we can consider  a sum of Dirac measure firstly, which is rather easy to handle in many cases. Concretely, let $(X, \mathcal{A}, \mu)$ and $(X, \mathcal{A}, \nu)$ be complete Borel measure spaces defined on a  Hausdorff space $X$, and $\mathcal{B}(X)$ be the space of locally  finite Borel space measure on $X$. Consider a sequence $k_j$ of kernels satisfying the following two properties:
\begin{enumerate}[\rm(i)]
		\item Each $k_j: X\times X\mapsto X$ is a measurable function such that $k_j(\cdot,y)\in L^1(\mu)$. 
		\item For each $j$ there exists $O_j$ such that \begin{equation*}
		\Vert k_j(\cdot,y)\Vert_{L^1(\mu)}\leq O_j \quad \text{for every} \;y\in X.
		\end{equation*}
\end{enumerate}
We write
\[
	K_jf(x) = \int_{X}k_j(x,y)f(y)d\mu(y)\quad \text{for} \; f\in X.
\]
Using Fubini-Tonelli's theorem, the second property of kernels makes the  maximal operator make sense as following:
\[
K^{*}f(x)=\sup_{j}\vert K_jf(x)\vert.
\]
Moreover, if $k_j(x,y)$ is a continuous function with compact support on $X\times X$ for any $j\in \mathbb{N}$, then each $K_j$ is bounded linear operator from $L^1(\mu)$ to $L^{\infty}(\mu)$. Such an operator has a natural extension to a bounded linear operator from $\mathcal{B}(X)$ to $L^{\infty}(\mu)$, which we denote by $K_j$ again, namely
\begin{equation}\label{2}
	K_{j}\nu(x)=\int_{X}k_j(x,y)d\nu(y), \quad K^{*}\nu(x)=\sup_{j}\vert K_j\nu(x)\vert.
\end{equation}
Especially, choose a sum of Dirac measure  $\nu=\frac{1}{H}\sum_{h=1}^H\delta_{x_h}$ for $x_1,\cdots,$ $x_H\in X$, then
\[
K_{j}\nu(x)=\frac{1}{H}\sum_{h=1}^{H}k_j(x,x_h), \quad K^{*}\nu(x)=\frac{1}{H}\sup_{j}\vert \sum_{h=1}^{H}k_j(x,x_h)\vert.
\]

In what follows, our aim is to extend  the "pointillist principle" of Carena \cite[Theorem 1]{Ca09}, then the similar conclusion can be obtained under slightly different condition. To certificate the Theorem \ref{thm1.1}, recall the dyadic cubes constructed by Christ in \cite{Ch90}, which is very important for extending results from harmonic analysis to the metric space setting. 

	\begin{thm}\cite[Theorem 11]{Ch90}\label{thm:DS}
 Let $(X, \rho)$ be a metric space and suppose that $\mu$ is a regular doubling measure on $X$. Then there exists  a collection of open subsets
		\[
		\{Q^k_\alpha \subset X:\, k\in \mathbb Z,\; \alpha\in I_k\}
		\] 
satisfying the following properties.
\begin{enumerate}
[\rm(i)]
\item For each integer $k$,
		\[\mu \left(X\backslash \bigcup \nolimits _{\alpha}Q^k_\alpha\right)=0.\]
\item Each  $Q^k_\alpha $ has a center $z_{Q^k_\alpha }$ such that
		\[B(z_{Q^k_\alpha },C_{{1}}\delta ^{{k}})\subseteq Q^k_\alpha \subseteq B(z_{Q^k_\alpha },C_{{2}}\delta ^{{k}}),\]
		where $C_1,C_2$ and $\delta$ are positive constants depending only on the doubling constant $A_1$ of the measure $\mu$ and independent of $Q^k_\alpha $.
\item For each $(k,\alpha)$ and each $l<k$, there is a unique $\beta$ such that $Q^k_\alpha \subset Q^l_\beta $.	
\item For any $k, \alpha$ and $t>0$, there exist constants $\delta \in (0,1),  C_3 < \infty, \eta > 0$ depending only on $\mu$ such that
	\[\mu \left\{x\in Q^k_\alpha :\; \rho(x, X\backslash Q^k_\alpha )\leq t \delta ^{k}\right\} \leq C_{3}t^{\eta }\mu (Q^k_\alpha ).\]
\end{enumerate}
\end{thm}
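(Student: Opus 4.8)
The plan is to reconstruct Christ's lattice by the classical net-and-tree method, so that the proof becomes a verification that a single net-based partition can be made to satisfy (i)--(iv) simultaneously. First, for each $k\in\mathbb Z$ I would use Zorn's lemma to choose a maximal $\delta^k$-separated set $\{z^k_\alpha:\alpha\in I_k\}$, that is, a set with $\rho(z^k_\alpha,z^k_\beta)\geq\delta^k$ for $\alpha\neq\beta$ that cannot be enlarged. Maximality forces the covering $X=\bigcup_\alpha B(z^k_\alpha,\delta^k)$, while separation makes the balls $B(z^k_\alpha,\delta^k/2)$ pairwise disjoint. The scale parameter $\delta\in(0,1)$ is kept free for now and fixed small, depending only on the doubling constant $A_1$, once the boundary estimate (iv) dictates how much room is needed.

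Second, I would organise the centers into a tree. For each fine center $z^{k+1}_\alpha$ assign as its parent the nearest coarser center $z^k_\beta$, breaking ties by a fixed well-ordering of the index sets; then $\rho(z^{k+1}_\alpha,z^k_\beta)<\delta^k$. Iterating the parent map produces, for every $l<k$, a unique level-$l$ ancestor of each level-$k$ center, which is exactly the datum behind the nesting property (iii). I would then define half-open cells $\widehat Q^k_\alpha$ by grouping the finer Voronoi-type cells of all descendants of $(k,\alpha)$, and take $Q^k_\alpha$ to be the interior of $\widehat Q^k_\alpha$. By construction the $\widehat Q^k_\alpha$ partition $X$ at each level and respect the tree order, so disjointness and (iii) are automatic. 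The geometric series $\sum_{j\geq0}\delta^{k+j}=\delta^k/(1-\delta)$ controlling accumulated parent-distances, together with the separation estimate, yields the two-sided containment (ii) with $C_1$ a small absolute constant and $C_2\asymp(1-\delta)^{-1}$.

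Third, property (i) reduces to showing that the topological boundaries $\partial\widehat Q^k_\alpha$ are $\mu$-null, since $X\setminus\bigcup_\alpha Q^k_\alpha\subseteq\bigcup_\alpha\partial\widehat Q^k_\alpha$. This in turn follows from the small-boundary estimate (iv) by letting $t\to0$, so (i) and (iv) stand or fall together. Proving (iv) is the crux and the main obstacle: the set $\{x\in Q^k_\alpha:\rho(x,X\setminus Q^k_\alpha)\leq t\delta^k\}$ is a thin collar around $\partial Q^k_\alpha$ whose measure cannot be bounded directly. The idea is to iterate the construction through the finer scales sitting inside $Q^k_\alpha$: at each successive level a fixed positive proportion of the measure of every sub-cube lies at definite distance from the complement, a fact extracted from the inner ball in (ii) together with the doubling inequality. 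Hence the collar is contracted by a uniform factor per scale, and summing the resulting geometric series in the scale index---after choosing $\delta$ small enough that the contraction ratio dominates $\delta^\eta$---gives the bound $C_3t^\eta\mu(Q^k_\alpha)$.

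The two delicate points I anticipate are (a) arranging the nearest-center tie-breaking so that the cells, hence the open cubes, are genuinely Borel, and (b) extracting the uniform per-scale contraction factor in the previous step, which is precisely where the hypothesis that $\mu$ is a regular doubling measure is indispensable. All constants $C_1,C_2,C_3,\delta,\eta$ produced this way depend only on $A_1$, as claimed.
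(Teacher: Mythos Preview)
The paper does not prove this statement at all: Theorem~\ref{thm:DS} is quoted verbatim from Christ \cite{Ch90} and used as a black box, with no argument supplied. There is therefore nothing in the paper to compare your proposal against.

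That said, what you have written is a faithful outline of Christ's original construction in \cite{Ch90}: maximal $\delta^k$-separated nets, a parent-assignment tree, cells built from descendants, and the small-boundary estimate obtained by iterating a uniform contraction through finer scales. As a sketch it is correct in spirit. The points you flag as delicate---measurability of the cells under tie-breaking and the extraction of the per-scale contraction factor from doubling---are exactly where the work lies in Christ's paper, and your proposal does not actually carry them out; it only names them. If this were meant to be a self-contained proof rather than a roadmap, those steps would need to be filled in, particularly the quantitative argument that a fixed proportion of each sub-cube's measure sits at distance $\gtrsim\delta^{k+j}$ from the complement, which is where the exponent $\eta$ comes from.
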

$I_k$ denotes some index set, depending on $k$. Dyadic cubes is constructed by  
\[ \bigcup_{k\in\mathbb{Z},\alpha\in I_k}\{Q^k_{\alpha}\}.\]
It should be noted that the center of  dyadic cubes satisfies maximal $\delta^k$-distance disperse condition, that is 
	\begin{equation} \label{eq:M}
		\rho(z_{Q^{k}_{\alpha}},z_{Q^{k}_{\beta}})\geq \delta^k \quad \text{for any} \;\alpha \neq \beta.
	\end{equation}
	In this context, maximality means that no new points of the space $X$ can be added
	to the set  $\{z_{Q^k_{\alpha}}\}$ such that \eqref{eq:M} remains valid. One other factor indeed, the last condition says that the area near the boundary of a "cube" $Q^{k}_{\alpha}$ is small.

The main result is as follow.
\begin{lem}\label{lem1}	
	Let $(X,\rho)$ be a metric space and let $\mu$ be a positive  regular Borel  measure
	satisfying the doubling condition on $X$. Let $\nu$ be a measure such
	that $d\nu =gd\mu$ with $g\in L^1_{loc}(X,\rho,\mu)$. Denote
	\begin{align*}
		\psi_\alpha(f):&= \nu(\{x\in X:|f(x)|>\alpha \})
	\end{align*}
	for a measurable function $f$ defined on $(X,\rho,\nu)$.
	If each kernel $k_j(x,y)$ is a continuous function with compact support on $X\times X$ and $K^{*}$ is defined in \eqref{2}, then
	\begin{equation}\label{6}
		\lim\limits_{\alpha \to \infty}\sup_{\Vert f\Vert_{L^1(\nu)}\leq 1}\psi_\alpha(K^{*}f)=0 \iff \lim\limits_{\alpha \to \infty}\sup_{\omega=\frac{1}{H}\sum_{h=1}^{H}\delta_{a_h}}\psi_{\alpha}(K^*\omega)=0
	\end{equation}
for every finite collection $a_1,a_2,\ldots a_H\in X$ not necessarily pairwise different points.
\end{lem}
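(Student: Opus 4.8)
The plan is to prove both implications in \eqref{6} by a single mechanism: approximate one kind of object (a discrete measure, or an $L^{1}(\nu)$ function regarded as the measure $f\,d\nu$) by the other in the \emph{vague} topology, use that each kernel $k_j(x,\cdot)$ is continuous with compact support to push this convergence through the individual operators $K_j$, and then upgrade to the maximal operator by lower semicontinuity. Write $\Phi(\alpha):=\sup_{\|f\|_{L^{1}(\nu)}\le 1}\psi_\alpha(K^{*}f)$ and $\Psi(\alpha):=\sup_{\omega=\frac1H\sum_h\delta_{a_h}}\psi_\alpha(K^{*}\omega)$. The engine I would isolate first is the following elementary fact: if positive measures $\sigma_n$ converge vaguely to $\sigma$, then for each fixed $x$ and $j$ one has $K_j\sigma_n(x)=\int k_j(x,\cdot)\,d\sigma_n\to\int k_j(x,\cdot)\,d\sigma=K_j\sigma(x)$, since $k_j(x,\cdot)\in C_c(X)$. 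Taking $\liminf_n$ and then $\sup_j$ yields the pointwise bound $\liminf_n K^{*}\sigma_n(x)\ge K^{*}\sigma(x)$, hence for every $\beta<\alpha$ the inclusion $\{K^{*}\sigma>\alpha\}\subseteq\liminf_n\{K^{*}\sigma_n>\beta\}$, and Fatou's lemma for $\nu$ gives $\psi_\alpha(K^{*}\sigma)\le\liminf_n\psi_\beta(K^{*}\sigma_n)$. Crucially, this step uses no estimate uniform in $j$, which is essential since the moduli of continuity of the $k_j$ need not be uniform.

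For the forward implication ($\Rightarrow$), I would fix a normalized Dirac sum $\omega=\frac1H\sum_{h=1}^H\delta_{a_h}$ and approximate it inside $L^{1}(\nu)$. Since $\mu$ is doubling, every ball has positive measure, so $\operatorname{supp}\mu=X$; assuming, as in the applications, that $\operatorname{supp}\nu=X$, the functions $f_\epsilon=\frac1H\sum_{h=1}^H \nu(B(a_h,\epsilon))^{-1}\mathbf 1_{B(a_h,\epsilon)}$ are well defined, satisfy $\|f_\epsilon\|_{L^{1}(\nu)}=1$, and $f_\epsilon\,d\nu\to\omega$ vaguely. Applying the engine with $\sigma_n=f_{\epsilon_n}\,d\nu$ and $\sigma=\omega$ gives $\psi_\alpha(K^{*}\omega)\le\liminf_\epsilon\psi_\beta(K^{*}f_\epsilon)\le\Phi(\beta)$, a bound uniform in $\omega$; hence $\Psi(\alpha)\le\Phi(\beta)$ for all $\beta<\alpha$, and $\alpha\to\infty$ gives $\lim_\alpha\Psi(\alpha)\le\lim_\alpha\Phi(\alpha)=0$.

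The reverse implication ($\Leftarrow$) carries the real work, and is where Christ's cubes (Theorem \ref{thm:DS}) enter. First I would normalize twice: splitting $f=(f_1-f_2)+i(f_3-f_4)$ into nonnegative parts gives $K^{*}f\le\sum_{i=1}^4 K^{*}f_i$, hence $\psi_\alpha(K^{*}f)\le\sum_i\psi_{\alpha/4}(K^{*}f_i)$, and replacing each nonzero $f_i$ by $f_i/\|f_i\|_{L^{1}(\nu)}$ only increases $K^{*}$ (as $\|f_i\|_{L^{1}(\nu)}\le1$), so I may assume $f\ge0$ with $\|f\|_{L^{1}(\nu)}=1$. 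For such $f$ I would discretize along the partition of Theorem \ref{thm:DS}: at scale $k$ set $\omega_k=\sum_{\alpha}\big(\int_{Q^k_\alpha} f\,d\nu\big)\delta_{z_{Q^k_\alpha}}$. Properties (i) and (ii) (the cubes cover $X$ up to a $\mu$-null set and have diameter $\le 2C_2\delta^{k}\to0$) give, for any $\phi\in C_c(X)$, the bound $|\int\phi\,d\omega_k-\int\phi f\,d\nu|\le\omega_\phi(2C_2\delta^{k})\to0$, i.e.\ $\omega_k\to f\,d\nu$ vaguely, with unit total mass. Finally I would replace $\omega_k$ by a genuine normalized Dirac sum by approximating the weights $\int_{Q^k_\alpha}f\,d\nu$ by multiples of $1/H$ with repeated centers, producing $\omega_k^{(H)}=\frac1H\sum_h\delta_{a_h}\to\omega_k$ vaguely as $H\to\infty$; a diagonal choice $H=H_k$ gives normalized Dirac sums with $\omega_k^{(H_k)}\to f\,d\nu$ vaguely. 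Feeding this into the engine yields $\psi_\alpha(K^{*}f)\le\liminf_k\psi_\beta(K^{*}\omega_k^{(H_k)})\le\Psi(\beta)$ uniformly in $f$, so with the two reductions $\Phi(\alpha)\le 4\,\Psi(\beta)$ for suitable $\beta<\alpha/4$, and $\alpha\to\infty$ finishes.

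The main obstacle, and the step I would treat most carefully, is the reverse direction: not the lower-semicontinuity inequality (clean once vague convergence is in hand), but the construction of a vaguely convergent sequence of \emph{normalized} Dirac sums approximating an arbitrary $f\in L^{1}(\nu)$ on a general metric space. In $\mathbb R^d$ the dyadic grid would suffice; here Theorem \ref{thm:DS} is precisely what supplies a measurable partition into sets of arbitrarily small diameter covering $X$ up to a null set, and I would lean on property (ii) for the vague convergence (property (iv), controlling the mass near cube boundaries, is available should a quantitative rate be needed). The sign and mass normalizations must be arranged compatibly with the requirement that the test objects be positive measures of unit mass, which is the one genuinely fiddly bookkeeping point. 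I would also track the consistency between the doubling measure $\mu$, which governs the cubes, and the weight $\nu=g\,d\mu$, which governs $\psi_\alpha$ and the $L^{1}$ norm, by reading $K_j$ on a function $f$ as $K_j(f\,d\nu)$ so that both sides of \eqref{6} are tested against the same object.
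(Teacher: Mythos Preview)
Your proof is correct and takes a genuinely different route from the paper's. The paper proceeds by a chain of explicit approximations (natural-weight Dirac sums $\to$ rational weights $\to$ real weights $\to$ dyadic simple functions $\to$ general $L^1$), and at each step controls the error by truncating the maximal operator to $K^*_N=\max_{j\le N}|K_j|$ and applying Chebyshev's inequality to bound $\psi_\beta(K^*_N(\text{difference}))$, using the uniform continuity of finitely many kernels at a time; this yields inequalities of the form $\psi_\alpha(K^*_N\omega)\le\psi_{\alpha-\beta}(K^*_N\bar\omega)+\varepsilon$, after which one sends $N\to\infty$, $\varepsilon\to0$, $\beta\to0$. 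Your approach replaces all of this machinery with a single lower-semicontinuity principle: vague convergence $\sigma_n\to\sigma$ gives $K^*\sigma\le\liminf_n K^*\sigma_n$ pointwise and hence $\psi_\alpha(K^*\sigma)\le\liminf_n\psi_\beta(K^*\sigma_n)$ by Fatou, so the whole argument reduces to constructing the right vaguely convergent sequences. This is cleaner and, as you correctly stress, never requires any uniformity in $j$; the paper's method is more hands-on and would survive weaker regularity of the kernels. Both arguments invoke Christ's cubes at the same point (discretizing an $L^1$ function), and both silently need $\nu$ to charge small neighborhoods of the $a_h$ in the forward direction, so your flagged hypothesis $\operatorname{supp}\nu=X$ is not a defect relative to the paper. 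The one step I would tighten is the passage from $\omega_k$ (possibly infinitely many atoms with real weights) to a genuine normalized Dirac sum: approximate $f$ first by a compactly supported nonnegative $L^1(\nu)$ function so that only finitely many cubes carry mass, and then the rational-weight/diagonal step is immediate.
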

\begin{proof}

For pairwise different points $a_1,a_2,\ldots a_H\in X$, we firstly denote following sets:
\begin{align*}
		\mathcal{M_{\mathbb{N}}}&= \{\omega=\frac{\sum_{h=1}^{H}c_h\delta_{a_h}}{\sum_{h=1}^{H}c_h}:\; c_h \in \mathbb{N}^+\}, \\
\mathcal{M_{\mathbb{Q}}}&= \{\omega=\frac{\sum_{h=1}^{H}c_h\delta_{a_h}}{\sum_{h=1}^{H}c_h}:\; c_h \in \mathbb{Q}^+\}, \\
\mathcal{M_{\mathbb{R}}}&= \{\omega=\frac{\sum_{h=1}^{H}c_h\delta_{a_h}}{\sum_{h=1}^{H}c_h}:\; c_h \in \mathbb{R}^+\},
\end{align*}
and
\begin{align*}
\mathcal{F}_{\mathcal{R}}&= \{f=\frac{\sum_{h=1}^{H}c_h\chi_{Q_h}}{\sum_{i=1}^{H}c_h\nu(Q_h)}:\; c_h \in \mathbb{R}^+, \; Q_i \cap Q_j=\emptyset\},
\end{align*}
where $Q_h$ is dyadic cube constructed in Theorem \ref{thm:DS} and $\chi_{Q_h}$ is characteristic function.

	{\bf Necessity. }We divide the proof of  
   \begin{equation*}
	\lim\limits_{\alpha \to \infty}\sup_{\omega=\frac{1}{H}\sum_{h=1}^{H}\delta_{a_h}}\psi_{\alpha}(K^*\omega)=0
\Longrightarrow	
\lim\limits_{\alpha \to \infty}\sup_{\Vert f\Vert_{L^1(\nu)}\leq 1}\psi_\alpha(K^{*}f)=0 
	\end{equation*}	
into four steps.

	{\bf Step 1. }By $\lim\limits_{\alpha \to \infty}\sup\limits_{\omega=\frac{1}{H}\sum_{h=1}^{H}\delta_{a_h}}\psi_{\alpha}(K^*\omega)=0$, it is easy to see
	$
	\lim\limits_{\alpha \to \infty}\sup\limits_{\omega\in \mathcal{M}_{\mathbb{N}}}\psi_\alpha(K^*\omega)
	$ $=0.$ In this step, we verify that
	$
	\lim\limits_{\alpha \to \infty}\sup\limits_{\omega\in \mathcal{M}_{\mathbb{Q}}}\psi_\alpha(K^*\omega)=0.
	$
	
	To prove this, we write $c_h=\frac{n_h}{m_h}$ with $n_h,m_h\in \mathbb{N}^+$ for $c_h\in \mathbb{Q}^+$. Since 
	\[
	\frac{\sum_{h=1}^{H}c_h\delta_{a_h}}{\sum_{h=1}^{H}c_h}	= \frac{\sum_{h=1}^{H}\overline{c}_h\delta_{a_h}}{\sum_{h=1}^{H}\overline{c}_h},\quad\text{where} \;\; \overline{c}_h=n_h\prod\limits_{j=1,j\neq h}^{H}m_j\in \mathbb{N},
	\]
we know $\mathcal{M}_{\mathbb{N}}=\mathcal{M}_{\mathbb{Q}}$.
Hence $
	\lim\limits_{\alpha \to \infty}\sup\limits_{\omega\in \mathcal{M}_{\mathbb{Q}}}\psi_\alpha(K^*\omega)=0
	$.
	
	{\bf Step 2. }In this step, we want to prove that  $
	\lim\limits_{\alpha \to \infty}\sup\limits_{\omega\in \mathcal{M}_{\mathbb{R}}}\psi_\alpha(K^*\omega)=0
	$. Let us start by defining maximal truncated operator $K^*_Nf(x)=\max\limits_{1\leq j\leq N}|K_jf(x)|$, then it is clear that 
	\begin{equation*}
		\begin{aligned}
			\lim\limits_{N\to \infty}\psi_\alpha(K^*_Nf) = 
			\psi_\alpha(K^*f)
		\end{aligned}
	\end{equation*} 
for each $\alpha >0$. Similarly, we define $K^*_N \omega$ for finite discrete measure $\omega$.
	
	We next claim that,  for each integer $N$, real numbers $\alpha,\varepsilon>0$, $0<\beta<\alpha$ and $\omega\in \mathcal{M}_{\mathbb{R}}$, we can find a finite discrete measure $\overline{\omega}\in \mathcal{M}_{\mathbb{Q}}$ satisfying the inequality 
	\begin{equation*}
		\psi_\alpha(K^*_N\omega)\leq \psi_{\alpha-\beta}(K^*_N\overline{\omega})+2\varepsilon.
	\end{equation*}
	
	In fact, for $\omega \in \mathcal{M}_{\mathbb{R}}$, take
	$d_h \in \mathbb{Q}^+$ such that $  c_h  =  d_h  +  r_h$, where $r_h>0$ will be determined later. Since 
	\[\frac{\sum\limits_{h=1}^{H}c_hk_{j}(x,a_h)}{\sum\limits_{h=1}^{H}c_h}
=\frac{\sum\limits_{h=1}^{H}d_hk_j(x,a_h)}{\sum\limits_{h=1}^{H}d_h}
+\frac{\sum\limits_{h=1}^{H}r_hk_j(x,a_h)}{\sum\limits_{h=1}^{H}d_h}
-\frac{\sum\limits_{h=1}^{H}c_hk_j(x,a_h)}{\sum\limits_{h=1}^{H}d_h}\cdot\frac{\sum\limits_{h=1}^{H}r_h}{\sum\limits_{h=1}^{H}c_h},
	\] 
we simplify above equality as
$$
K_j(c)=K_j(d)+K_j(rd)-K_j(crd).
$$
Then, for $0\leq\beta\leq \alpha$, we have 
	\[
	\begin{aligned}
&\psi_\alpha(K^*_N\omega)
=\psi_\alpha\left(\sup_{1\leq j\leq N} |K_j(c)|\right)\\
\leq & \psi_{\alpha-\beta}\left(\sup_{1\leq j\leq N} |K_j(d)|\right) 
		+\psi_{\frac{\beta}{2}}\left(\sup_{1\leq j\leq N} |K_j(rd)|\right)  	\\
&+\psi_{\frac{\beta}{2}}\left(\sup_{1\leq j\leq N} |K_j(crd)|\right).
	\end{aligned}
	\]
Furthermore,
	\[
	\begin{aligned}
\psi_{\frac{\beta}{2}}\left(\sup_{1\leq j\leq N} |K_j(rd)|\right)   
\leq & \psi_{\frac{\beta}{2}}\left(\frac{\sum_{j=1}^{N}\sum_{h=1}^{H}r_h|k_j(x,a_h)|}{\sum_{h=1}^{H}d_h}\right)\\ \leq & \frac{2}{\beta}\sum_{j=1}^{N}\sum_{h=1}^{H}|r_h|\int_{X}|k_j(x,a_h)|d\nu\cdot\frac{1}{\sum_{h=1}^{H}d_h}, \\
\psi_{\frac{\beta}{2}}\left(\sup_{1\leq j\leq N} |K_j(crd)|\right) \leq &\frac{2}{\beta}\sum_{j=1}^{N}\sum_{h=1}^{H}|c_h|\int_{X}|k_j(x,a_h)|d\nu\cdot\frac{\sum_{h=1}^{H}r_h}{\sum_{h=1}^{H}d_h\sum_{h=1}^{H}c_h},
	\end{aligned}
	\]
and 
$\int_{X}|k_j(x,a_h)|d\nu< \infty$
for all $j$, we can choose small $r_h$ such that the  right side hand of the above inequalities are all less than arbitrary $\varepsilon >0$. If we write $\overline{\omega}=\frac{\sum_{h=1}^{H}d_h\delta_{d_h}}{\sum_{h=1}^{H}d_h}$, our claim is proved. 
	
	Hence we use $K^*_N\leq K^*$ to get $\psi_\alpha(K^*_N\omega)\leq \sup\limits_{\overline{\omega}\in \mathcal{M_{\mathbb{Q}}}}\psi_{\alpha-\beta}(K^*\overline{\omega})+2\varepsilon.$
	Moreover, taking the maximum in the measure family $\mathcal{M_{\mathbb{R}}}$ and letting  $\beta \to 0$, $\varepsilon\to 0$, we have
	\[
	\sup\limits_{\overline{\omega}\in \mathcal{M_{\mathbb{Q}}}}\psi_{\alpha}(K^*\overline{\omega})\leq \sup\limits_{\omega\in \mathcal{M_{\mathbb{R}}}}\psi_\alpha(K^*_N\omega) \leq \lim_{\alpha_0\to \alpha^-}\sup\limits_{\overline{\omega}\in \mathcal{M_{\mathbb{Q}}}}\psi_{\alpha_0}(K^*\overline{\omega}).
	\]
	Observing that $\sup\limits_{\overline{\omega}\in \mathcal{M_{\mathbb{Q}}}}\psi_{\alpha}(K^*\overline{\omega})$ monotonically decreases with $\alpha$ and 
\[\lim\limits_{\alpha\to \infty}\sup\limits_{\overline{\omega}\in \mathcal{M_{\mathbb{Q}}}}\psi_{\alpha}(K^*\overline{\omega})=0.\]
By sandwich theorem, we obtain \[\lim\limits_{\alpha\to \infty}\sup\limits_{\overline{\omega}\in \mathcal{M_{\mathbb{R}}}}\psi_{\alpha}(K^*\overline{\omega})=0.\] 
By the way, this trick will be used repeatedly. To facilitate the writing, we will not describe this process in detail.
	
	{\bf Step 3. }In this step, we show that$
	\lim\limits_{\alpha \to \infty}\sup\limits_{f\in \mathcal{F}_{\mathbb{R}}}\psi_\alpha(K^*f)=0
	$.
	Repeat the trick in {\bf Step 2}, it is enough to prove that for each integer $N$, real numbers $\alpha,\varepsilon>0$, $0<\beta<\alpha$ and $f\in \mathcal{F}_{\mathbb{R}}$, there exists a finite discrete measure $\omega\in\mathcal{M}_{\mathbb{R}}$ satisfying the inequality 
	\[
	\psi_\alpha(K^*_Nf)\leq \psi_{\alpha-\beta}(K^*_N\omega)+\varepsilon.
	\]
	The desired limiting behavior will follow by letting  $\beta \to 0$, $\varepsilon\to 0$ and $N \to \infty$ and taking the maximum as before. 
	
	Let $f=\frac{\sum_{h=1}^{H}c_h\chi_{Q_h}}{\sum_{h=1}^{H}c_h\nu(Q_h)}\in \mathcal{F}_{\mathbb{R}}$ and
	$
	\omega= \frac{\sum_{h=1}^{H}c_h\nu(Q_h)\delta_{z_{Q_h}}}{\sum_{h=1}^{H}c_h\nu(Q_h)}\in \mathcal{M_{\mathbb{R}}},
	$
where $z_{Q_h}$ denotes the center of the dyadic cube $Q_{h}$.
	If $0\leq \beta \leq \alpha$, for fixed $N$, we obtain that
	\[
\psi_\alpha(K^*_Nf) \leq\psi_{\alpha-\beta}(K^*_N\omega) +\psi_\beta(K^*_N(f-\omega)). 
     \]
From 
   \[
	\begin{aligned}
		\left|\left(\sum_{h=1}^Hc_h\nu(Q_h)\right)K_j(f-\omega)\right|=&\left|\sum_{h=1}^Hc_h\left(\int_{Q_h}k_j(x,y)d\nu(y)-\int_{Q_h}k_j(x,z_{Q_h})d\nu(y)\right)\right|	 \\ \leq&\sum_{h=1}^Hc_h\int_{Q_h}|k_j(x,y)-k_j(x,z_{Q_h})|d\nu(y),
	\end{aligned}
	\]
we have
	\[
	\begin{aligned}
		&\psi_{\beta}(K^*_N(f-\omega))\\
		\leq&\sum_{j=1}^N\frac{1}{\beta}\int_{X}|K_j(f-\omega)(x)|d\nu(x)	 \\ \leq&\frac{1}{\beta\left(\sum\limits_{h=1}^Hc_h\nu(Q_h)\right)}\sum_{j=1}^N\int_X\left(\sum_{h=1}^Hc_h\int_{Q_h}|k_j(x,y)-k_j(x,z_{Q_h})|d\nu(y)\right)d\nu(x)\\
		\leq&\frac{1}{\beta\left(\sum\limits_{h=1}^Hc_h\nu(Q_h)\right)}\sum_{j=1}^N\sum_{h=1}^Hc_h\int_{Q_h}\left(\int_{F_j}|k_j(x,y)-k_j(x,z_{Q_h})|d\nu(x)\right)d\nu(y),
	\end{aligned}
	\]
where $F_j$ denotes the projection of the support of $k_{j}(x,y)$. Clearly, $F_j$ is a bounded set  with finite measure. Since every $k_j(x,y)$ is a uniformly continuous  function with compact support in $X \times X$, we can take small ${\rm diam} (Q_{i})$ such that the term in above inequalities is small enough. 
	Similar to the previous proof in {\bf Step 2}, we conclude that
	$
	\lim\limits_{\alpha \to \infty}\sup\limits_{f\in \mathcal{F}_{\mathbb{R}}}\psi_\alpha(K^*f)=0
	$.
	
	{\bf Step 4. }Finally, from the fact that the set  of all real coefficients linear combinations of  characteristic functions of dyadic sets is dense in $L^1(\mu)$, by a standard argument, one obtains 
	\[
	\lim\limits_{\alpha \to \infty}\sup\limits_{\Vert f\Vert_{1}\leq 1  }\psi_\alpha(K^*f)=\lim\limits_{\alpha \to \infty}\sup\limits_{f\in \mathcal{F}_{\mathbb{R}}}\psi_\alpha(K^*f)=0.
	\]
	The proof of Lemma  \ref{lem1} in one direction is complete.

	{\bf Sufficiency. }Conversely, we want to prove 
	\[
	\lim_{\alpha \to \infty}\sup_{\Vert f\Vert_{1}\leq 1}\psi_\alpha(K^*f)= 0
\Longrightarrow	
	\lim_{\alpha \to \infty}\sup_{\omega\in \mathcal{M}_{\mathbb{N}}}\psi_\alpha(K^*\omega)=0.
	\]
Clearly $\omega\in \mathcal{M}_{\mathbb{N}}\subset \mathcal{M_{\mathbb{Q}}}$. 
For pairwise different points $a_1,a_2,\ldots a_H\in X$, we denote the metric by $\rho$ and $d=\min \{\rho(x_i,x_j), x_i \neq x_j \}$.
Now we apply the properties (i) and (ii) in Theorem \ref{thm:DS}, then there exists $Q^n_{i_h}$ such that $x_h\in \overline{Q^n_{i_h}} $. Let $n$ be a large integer such that $C_2\delta^n<\frac{d}{4}$, where $C_2,\delta$ are constants mentioned in Theorem \ref{thm:DS}. We claim the set in $\{Q^n_{i_h}\}_{h=1}^{H}$ are pairwise disjoint. In fact, if $x\in Q^n_{i_h} \cap Q^n_{i_m} $ for $h\neq m$. Since $Q^n_{i_h}\subset B(z_{Q^n_{i_h}},C_2\delta^n)\subset B(z_{Q^n_{i_h}},\frac{d}{4})$ and $Q^n_{i_m}\subset B(z_{Q^n_{i_m}},C_2\delta^n)\subset B(z_{Q^n_{i_m}},\frac{d}{4})$, using the triangle inequality, we have
	\[
	\rho(x_h,x_m)\leq \rho(x_h,z_{Q^n_{i_h}})+\rho(z_{Q^n_{i_h}},x)+\rho(x,z_{Q^n_{i_m}})+\rho(z_{Q^n_{i_m}},x_m)<d.
	\]
	This is a contradiction to $\rho(x_h,x_m)\geq d$.	
	Apparently, let $c_h \in \mathbb N^+$ and
	\[
	\omega=\sum_{h=1}^H c_h\delta_{x_h} ,\quad  f=\frac{1}{\sum\limits_{h=1}^Hc_h}\sum_{h=1}^H\frac{c_h}{\nu(Q^n_{i_h})}\chi_{Q^n_{i_h}}, 
	\]
then $\omega\in \mathcal{M_{\mathbb{N}}}$ and $\Vert f\Vert_{1}=1$. Fix $N\in\mathbb{N}^+$ and $\beta>0$, through simple calculation, we have
	\[
	\begin{aligned}
		\psi_{\beta}(K^*_N(f-\omega))
		\leq &  \sum_{j=1}^N\psi_{\beta}(K_j(f-\omega))\leq\sum_{j=1}^N\frac{1}{\beta}\left|\int_{X}k_j(x,y)(f(y)d\nu-d\omega)\right|\\
	\end{aligned}
	\]
	and
	\[
	\bigg|\int_{X}k_j(x,y)(f(y)d\nu-d\omega)\bigg|	\leq\frac{1}{\sum\limits_{h=1}^{H}c_h}\sum_{h=1}^{H}\frac{c_h}{\nu\left(Q^n_{i_h}\right)}\left|\int_{Q^n_{i_h}}k_j(x,y)-k_j(x,x_h)d\nu(y)\right|.
	\]
Hence,
	\[
	\begin{aligned}
		&\psi_{\beta}(K^*_N(f-\omega))\\
		\leq& \frac{1}{\sum\limits_{h=1}^{H}\beta c_h} \sum_{h=1}^H \frac{c_h}{\nu\left(Q^n_{i_h}\right)}\sum_{j=1}^N\int_X\left(\int_{Q^n_{i_h}}|k_j(x,y)-k_j(x,x_h)|d\nu(y)\right)d\nu(x)\\
		\leq&\frac{1}{\sum\limits_{h=1}^{H}\beta c_h} \sum_{h=1}^H \frac{c_h}{\nu\left(Q^n_{i_h}\right)}\sum_{j=1}^N\int_{F_j}\left(\int_{Q^n_{i_h}}|k_j(x,y)-k_j(x,x_h)|d\nu(x)\right)d\nu(y),
	\end{aligned}
	\]
where $F_j$ is the projection of the support of $k_{j}(x,y)$. Similar to  {\bf Step 3}, we can choose the diameter of the dyadic sets such that $\psi_{\beta}(K^*_N(f-\omega))\leq \varepsilon$ for any $\varepsilon>0$. Repeat the same step like before, we obtain that $	\lim_{\alpha \to \infty}\sup_{\omega\in \mathcal{M}_{\mathbb{N}}}\psi_\alpha(K^*\omega)=0$. This completes the proof of Lemma \ref{lem1}.
\end{proof}
	
	Combining with Corollary \ref{thmx:Gu} and Lemma \ref{lem1}, we obviously have the following corollary.

\begin{cor}\label{co:1314}
	Let $(X,\rho)$ be a metric space.  Assume every kernel $k_j(x,y)$ is a continuous function with compact support on $X\times X$ and $K^{*}$ is defined in \eqref{2}.  If $\mu$ is a totally finite complete measure satisfing the doubling condition and
	\begin{equation*}
		\lim\limits_{\alpha  \to \infty}\sup_{\omega=\frac{1}{H}\sum_{h=1}^{H}\delta_{a_h}}\mu(\{x\in X:K^{*}\omega(x)>\alpha \})>0,
	\end{equation*}
	then there exists a function $g \in L^1(\mu)$ such that $T_k(g)$  diverges on $\mu$-non-zero set in $X$. 
\end{cor}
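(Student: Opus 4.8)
The plan is to obtain the corollary as a direct synthesis of Lemma~\ref{lem1} and Corollary~\ref{thmx:Gu}, specializing the measure $\nu$ of the lemma to $\nu=\mu$; this is legitimate because $g\equiv 1$ is locally integrable once $\mu$ is finite, so the hypotheses of Lemma~\ref{lem1} hold verbatim. First I would note that $\sup_{\omega}\psi_\alpha(K^{*}\omega)$ is monotonically nonincreasing in $\alpha$, so the limit appearing in the hypothesis genuinely exists in $[0,\infty)$ and the assumption says exactly that it is strictly positive rather than zero. Since $K^{*}\omega\ge 0$, we also have $\psi_\alpha(K^{*}\omega)=\mu(\{x:K^{*}\omega(x)>\alpha\})$, matching the expression in the statement; moreover the supremum over $\omega=\frac1H\sum_{h=1}^H\delta_{a_h}$ (with repeated points allowed) is precisely the one occurring on the right of \eqref{6}.

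Next I would feed this into the equivalence \eqref{6} of Lemma~\ref{lem1}. The standing hypotheses of the corollary --- $\mu$ doubling and each $k_j$ continuous with compact support on $X\times X$ --- are exactly those under which the lemma applies, so the two limits in \eqref{6} vanish simultaneously. I would use the contrapositive of the (shorter) sufficiency half, namely ``left-hand limit $=0\Rightarrow$ right-hand limit $=0$'': because the right-hand limit is nonzero by hypothesis, the left-hand limit must be nonzero too, i.e.
\[
\lim_{\alpha\to\infty}\sup_{\Vert f\Vert_{L^1(\mu)}\leq 1}\mu(\{x\in X:K^{*}f(x)>\alpha\})>0.
\]
This is precisely condition \eqref{eq:alpha} for the family $\{K_j\}$ with maximal operator $T^{*}=K^{*}$.

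It then remains to verify the remaining hypotheses of Corollary~\ref{thmx:Gu} for $\{K_j\}$. Here I would invoke the observation recorded just before \eqref{2}: since each $k_j$ is continuous with compact support, $K_j$ is a bounded linear operator from $L^1(\mu)$ into $L^{\infty}(\mu)$, and hence into $(L^0(\mu),d)$. Combined with the completeness of $\mu$ and with $\mu(X)<\infty$, the entire hypothesis list of Corollary~\ref{thmx:Gu} is met. Applying that corollary with $T_k=K_k$ then produces a function $g\in L^1(\mu)$ for which the sequence $K_k(g)$ diverges on a set of positive $\mu$-measure, which is the asserted conclusion.

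The argument carries no intrinsic analytic difficulty of its own, since all the real work was already expended in proving Lemma~\ref{lem1}. If any point deserves care it is the direction of the implication drawn from \eqref{6}: one must be sure to apply the contrapositive of the sufficiency implication (not the necessity one), and to justify that the monotonicity of $\alpha\mapsto\psi_\alpha$ legitimately upgrades ``the limit is not zero'' to ``the limit is strictly positive,'' as well as to check that the specialization $\nu=\mu$ keeps $g\equiv1$ within the locally integrable class required by the lemma.
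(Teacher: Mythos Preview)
Your proposal is correct and matches the paper's own argument exactly: the paper simply states that the corollary follows by combining Corollary~\ref{thmx:Gu} with Lemma~\ref{lem1}, and your write-up spells out precisely that combination (specializing $\nu=\mu$, taking the contrapositive of the sufficiency direction of~\eqref{6}, and checking the boundedness of each $K_j$). There is nothing to add.
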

	Based on above lemmas, we can prove Theorem \ref{thm1.1}.

{\bf Proof of Theorem \ref{thm1.1}.} 
Let $\mu$ denote a spectral measure supported on a compact subset of $X\subset(\mathbb{R}^d,\rho)$, where $(\mathbb{R}^d,\rho)$ is the Euclidean space. Let $\{e^{-2\pi i\lambda \cdot x}: \lambda \in \Lambda\}$ be an exponential  orthonormal basis of $L^2(\mu)$. 
Define the Mock Dirichlet kernel as
\[
k_{n}(x,y)=\sum_{\lambda\in \Lambda_{n}}e^{2\pi i\lambda \cdot (x-y)}.
\] 
Then the Mock Dirichlet summation operator $S_n$ with $\Lambda_n$ can be written as 
\[ 
S_{n}(f)(x)=\sum_{\lambda \in \Lambda_{n}} c_{\lambda}(f) e^{2 \pi i \lambda \cdot x}=\int_Xk_n(x,y)f(y)d\mu(y).
\]
 From Corollary \ref{co:1314}, the theorem follows immediately.$\hfill\qed$ 

\section{Application to self-affine spectral measures}

In this section, we apply our results to self-affine spectral measures. 
Recall that the self-affine measure is defined by  iterated function system (IFS).

\begin{de}[Self-affine measure]\label{de:2}
Let $R$ be a $d\times d$ expansive matrix (all its
eigenvalues have modulus strictly bigger than one). Let $B=\{b_0,b_1,\cdots,b_{N-1}\}$ be a finite subset of $\mathbb{R}^{d}$. We define the affine iterated function system
\[
	\varphi_{b}(x)=R^{-1}(x+b)\quad \text{for}\; x\in \mathbb{R}^d \;\text{and}\; b\in B.
\]
The self-affine measure (with equal weights) is the unique probability measure  satisfying
\begin{equation*}\label{7}
			\mu(E) =\frac{1}{N}\sum_{j=1}^{N}\mu(\varphi_{b}^{-j}(E))\quad \text{for all Borel subsets}\; E \;\text{of}\;\;\mathbb{R}^d.
\end{equation*}
This measure is supported on the attractor $T(R,B)$ which is the unique compact
set that satisfies
\[
	T(R,B) = \bigcup_{b\in B}\varphi_{b}(T(R,B)).
\]
The set $T(R,B)$ is also called the self-affine set associated with the IFS. It can also
be described as
\[
	T(R,B) =\Big\{\sum_{k=1}^{\infty}R^{-k}b_k: \;  b_k \in B \Big\}.
\]
One can refer to  \cite{Hu81} for a detailed exposition of the theory of iterated
function systems. In this section, we will use $\mu_{R,\{-1,1\}}$ to denote Cantor measure which is the special case when $d=1$ and $B=\{-\frac{1}{4},\frac{1}{4}\}$.
\end{de}

To the best of our knowledge, most of self-affine spectral measures are constructed by Hadamard triples.
 
\begin{de}[Hadamard triple]
	For a given expansive $d\times d$ matrix $R$ with integer entries. Let $B,L \subset \mathbb Z^d$ be finite sets of integer vectors with the same cardinality $N\geq 2$. We say that the triple (R,B,L) forms a Hadamard triple if the matrix
	\[
	H= \frac{1}{\sqrt{N}}\left[e^{2\pi iR^{-1}b\cdot l}\right]_{l\in L, b\in B}
	\]
is unitary, i.e., $H^{*}H=I$, where $H^{*}$ denotes conjugate transpose of $H$. 
\end{de}
The system $(R,B,L)$ forms a Hadamard triple if and only if the Dirac measure $\delta_{R^{-1}D}=\frac{1}{\# B}\sum_{b\in B}\delta_{R^{-1}b}$ is a spectral measure with spectrum $L$. Moreover, this property is a key property in producing spectrum of self-affine spectral  measures.
{\L}aba and Wang \cite{LW02},  Dutkay,  Haussermann and Lai \cite{DHL19} eventually proved that Hadamard triple generates self-affine spectral measure in all dimension.

If $(R,B,L)$ forms a Hadamard triple, 
we let
	\begin{equation*}\label{eq:sp}
		\Lambda_n =L+R^{t}L+(R^t)^2L+\cdots+(R^t)^{n-1}L
      =\sum_{k=0}^{n-1}(R^{t})^{k}L,
	\end{equation*}
and
	\begin{equation}\label{eq:sp2}
		\Lambda =\bigcup_{n=0}^{\infty}\Lambda_n=\sum_{k=0}^{\infty}(R^{t})^{k}L,
	\end{equation}
where $R^{t}$ denotes transpose of $R$. 

The set $\Lambda$ forms an orthonormal set  for the self-affine spectral measure $\mu:=\mu_{R,B}$. But the set $\Lambda$ can be incomplete (see \cite[p. 4]{DLW17}).
In this paper, we assume that the self-affine spectral measure $\mu$ generated by Hadamard triple $(R,B,L)$ always has a spectrum like  \,(\ref{eq:sp2}).

We say that the self-affine measure $\mu$ in Definition \ref{de:2} satisfies the no-overlap condition or measure disjoint condition if
 \[
 	\mu(\varphi_{b}(T(R,B))\cap \varphi_{b^{'}}T(R,B)) = 0
 	\; \text{for all}\; b\neq b^{'} \in B. 
 \] 
Dutkay,  Haussermann and Lai \cite{DHL19} proved that if the self-affine spectral measure is generated by 
Hadamard triple, then the no-overlap condition is satisfied.

Following the work of Dutkay et al.\cite{DHS14}, encoding map  plays the key role in linking no-overlap self-affine spectral measure and code space.	
Let $\mathbb{N}^{*} $ denote the positive integer numbers. For symbolic space $B^{\mathbb{N}^*}$ with the product probability measure $dP$ where each digit in $B$ has probability $1/N$, Dutkay et al.\cite{DHS14} proved following proposition.
\begin{prop}\cite[Proposition 1.11]{DHS14}\label{prop:iso}
Define the encoding map $h: B^{\mathbb{N}^*}\mapsto T(R,B)$ by
		\[
			h(b_1b_2b_3...)=\sum_{i=1}^{\infty}R^{-i}b_i,
		\]
then $h$ is onto, it is one to one on a set of full measure, and it is measure preserving. 

Define the right shift $S:B^{\mathbb{N}^*}\mapsto B^{\mathbb{N}^*}$,
				\[
					S(b_1b_2b_3\cdots)=b_2b_3\cdots,
				\]
and the map $\mathcal{R}: T(R,B)\mapsto T(R,B)$,
		\[
		\mathcal{R}\left(\sum_{i=1}^{\infty}R^{-i}b_{i}\right)=\sum_{i=1}^{\infty}R^{-i}b_{i+1}.
		\]
Then 
\[hS=\mathcal{R}h.\]
\end{prop}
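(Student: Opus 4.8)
The plan is to handle the four assertions separately, leaning throughout on the self-similar structure of $\mu$ and on the no-overlap condition, which holds here because $\mu$ is generated by a Hadamard triple (Dutkay--Haussermann--Lai). I would first record the conjugacy identity that drives everything: factoring $R^{-1}$ out of the defining series shows that for $w=b_1b_2\cdots\in B^{\mathbb{N}^*}$,
\[
h(w)=R^{-1}\bigl(h(Sw)+b_1\bigr)=\varphi_{b_1}\bigl(h(Sw)\bigr).
\]
Granting this, the relation $hS=\mathcal{R}h$ is immediate from the definitions, since $h(Sw)=\sum_{i\geq 1}R^{-i}b_{i+1}$, whereas $\mathcal{R}(h(w))=\mathcal{R}\bigl(\sum_{i\geq 1}R^{-i}b_i\bigr)=\sum_{i\geq 1}R^{-i}b_{i+1}$, so the two agree. (One should observe that $\mathcal{R}$ is well defined $\mu$-a.e.: from the identity above, $\mathcal{R}(x)=Rx-b_1$ depends only on the leading digit $b_1$, which is unique off the coding-overlap set treated below.) Surjectivity of $h$ is then essentially the statement of Definition \ref{de:2}, since the description $T(R,B)=\{\sum_{k\geq 1}R^{-k}b_k:b_k\in B\}$ says precisely that every point of the attractor equals $h$ of some symbol sequence.

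For measure preservation I would show that the pushforward $h_{*}P$ satisfies the same invariance equation as $\mu$ and then invoke uniqueness of the self-affine measure. Using the product structure $B^{\mathbb{N}^*}\cong B\times B^{\mathbb{N}^*}$ under $w\mapsto(b_1,Sw)$, which turns $P$ into (uniform on $B$)$\,\times P$, together with $h=\varphi_{b_1}\circ h\circ S$, I decompose over the leading symbol: for any Borel set $E$,
\[
h_{*}P(E)=P(h^{-1}E)=\sum_{b\in B}P\bigl(\{w:b_1=b,\ h(Sw)\in\varphi_b^{-1}E\}\bigr)=\frac{1}{N}\sum_{b\in B}h_{*}P\bigl(\varphi_b^{-1}E\bigr).
\]
This is exactly the self-affine fixed-point equation from Definition \ref{de:2}; since $\mu$ is its unique probability-measure solution and $h_{*}P$ is a probability measure, we conclude $h_{*}P=\mu$, i.e.\ $h$ is measure preserving.

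The main obstacle is the claim that $h$ is one-to-one on a set of full measure, and this is where the no-overlap condition is essential. If a point $x$ admits two distinct codings that first disagree at position $n$, then $x\in\varphi_{c_1\cdots c_{n-1}}\bigl(\varphi_b(T)\cap\varphi_{b'}(T)\bigr)$ for some finite word $c_1\cdots c_{n-1}$ and some $b\neq b'$, so the set of points with non-unique coding is contained in the countable union
\[
\bigcup_{n\geq 1}\ \bigcup_{|w|=n-1}\ \bigcup_{b\neq b'}\varphi_{w}\bigl(\varphi_b(T)\cap\varphi_{b'}(T)\bigr).
\]
Each overlap $\varphi_b(T)\cap\varphi_{b'}(T)$ is $\mu$-null by the no-overlap condition, so it remains to check that each affine map $\varphi_w$ carries $\mu$-null subsets of $T$ to $\mu$-null sets. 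I would obtain this from the iterated invariance $\mu=N^{-n}\sum_{|v|=n}(\varphi_v)_{*}\mu$, in which the pieces $\varphi_v(T)$ are pairwise disjoint up to $\mu$-measure zero; this yields $\mu|_{\varphi_w(T)}=N^{-n}(\varphi_w)_{*}\mu$ and hence $\mu(\varphi_w(A))=N^{-n}\mu(A)$ for every $A\subseteq T$. A countable union of null sets being null then completes the argument. The only delicate points are deducing the a.e.\ disjointness of the level-$n$ pieces from the no-overlap condition and confirming that each $\varphi_v$ is a $\mu$-measure-scaling bijection, the latter being automatic since every $\varphi_b$ is an affine bijection of $\mathbb{R}^d$.
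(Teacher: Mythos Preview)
Your argument is correct and self-contained. Note, however, that the paper does not actually prove this proposition: it is quoted verbatim as \cite[Proposition 1.11]{DHS14} and used as a black box, so there is no ``paper's own proof'' to compare against. What you have written is essentially the standard proof one finds in the IFS literature (and presumably in \cite{DHS14}): surjectivity from the series description of $T(R,B)$, the intertwining $hS=\mathcal{R}h$ by direct computation, measure preservation via Hutchinson's uniqueness of the invariant probability, and injectivity on a full-measure set from the no-overlap condition together with the scaling $\mu(\varphi_w(A))=N^{-|w|}\mu(A)$.

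One small remark: your aside that $\mathcal{R}$ is well defined only $\mu$-a.e.\ is accurate and worth keeping, since the formula $\mathcal{R}(x)=Rx-b_1$ depends on the choice of leading digit. The paper glosses over this point, but it matters for the subsequent ergodicity corollary and the Birkhoff argument in Lemma~\ref{Classification}.
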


By Proposition \ref{prop:iso}, we immediately obtain the following corollary.
\begin{cor}\label{co:er}
	The dynamical systems of $(T(R,B),\mathcal{F},\mu,\mathcal{R})$ is ergodic.
\end{cor}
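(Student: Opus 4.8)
The plan is to deduce ergodicity of $(T(R,B),\mathcal{F},\mu,\mathcal{R})$ from Proposition \ref{prop:iso} by transporting the problem to the symbolic model $(B^{\mathbb{N}^*},dP,S)$, where the shift $S$ is a one-sided Bernoulli shift on the product space in which every digit is chosen independently with equal probability $1/N$. Since the encoding map $h$ is onto, measure preserving, and bijective off a null set, it is a measure isomorphism intertwining the two systems via $hS=\mathcal{R}h$. Ergodicity is an isomorphism invariant, so it suffices to prove that the Bernoulli shift $(B^{\mathbb{N}^*},dP,S)$ is ergodic and then pull the conclusion back through $h$.

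First I would record carefully that $h$ being measure preserving and injective on a set of full $dP$-measure means $h$ is an invertible measure-preserving transformation between $(B^{\mathbb{N}^*},dP)$ and $(T(R,B),\mu)$ modulo null sets, and that the intertwining relation $hS=\mathcal{R}h$ then gives $\mathcal{R}=h\circ S\circ h^{-1}$ $\mu$-almost everywhere. Consequently, if $E\subset T(R,B)$ is $\mathcal{R}$-invariant, then $h^{-1}(E)$ is $S$-invariant (up to null sets), and $P(h^{-1}(E))=\mu(E)$; hence it is enough to show that every $S$-invariant set has $dP$-measure $0$ or $1$.

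Second, I would establish ergodicity of the one-sided Bernoulli shift. The cleanest route is the Kolmogorov zero-one law: the product measure $dP$ makes the coordinate projections independent, an $S$-invariant set lies in the tail $\sigma$-algebra $\bigcap_{n}\sigma(b_{n+1},b_{n+2},\dots)$ because invariance lets one express it using coordinates arbitrarily far out, and the zero-one law forces its measure to be $0$ or $1$. Alternatively one can invoke the standard fact that Bernoulli shifts are mixing, hence ergodic; either way this is a textbook result for product measures, and I would cite it rather than reprove it in full.

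The only genuine subtlety, and the step I expect to require the most care, is the bookkeeping of null sets when transferring invariance across $h$: since $h$ is only bijective on a full-measure set and the identity $hS=\mathcal{R}h$ holds everywhere but $h^{-1}$ exists only almost everywhere, one must verify that an $\mathcal{R}$-invariant set pulls back to a genuinely $S$-invariant set after discarding a $P$-null set, and that measures are preserved under this correspondence. This is routine measure-theoretic hygiene rather than a conceptual obstacle, and once it is dispatched the zero-one law closes the argument immediately.
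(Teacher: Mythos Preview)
Your proposal is correct and is precisely the argument the paper has in mind: the paper offers no detailed proof, stating only that the corollary follows immediately from Proposition~\ref{prop:iso}, i.e., from the measure-preserving conjugacy $hS=\mathcal{R}h$ with the one-sided Bernoulli shift. Your write-up simply makes explicit the two ingredients the paper leaves implicit---that Bernoulli shifts are ergodic and that ergodicity transfers through a measure isomorphism modulo null sets---so there is no divergence in approach.
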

		
For a self-affine spectral measure $\mu:=\mu_{R,B}$ generated by Hadamard triple $(R,B,L)$,
let $\tau$ be an integer such that 
\[
\tau\Lambda =\tau\bigcup_{n=0}^{\infty}\Lambda_n=\tau\sum_{k=0}^{\infty}(R^{t})^{k}L
\]
is a spectrum of $\mu$.
 Define the Dirichlet kernel 
\[
	D_{n}(x) := \sum_{\lambda\in \tau\Lambda_n}e^{2\pi i \lambda \cdot x}
\;\;(x\in \mathbb R).
\]
For $f\in L^1(\mu)$, the Mock Dirichlet summation operator
\[
	 S_{n}(f)(x)=\sum_{\lambda \in \tau \Lambda_{n}} \left( \int_{T(R,B)} f(y) e^{-2 \pi i \lambda \cdot y}d\mu(y)\right)e^{2 \pi i \lambda \cdot x}
\]
can be written as 
\[
	S_{n}(f)(x)=\int_{T(R,B)} f(y) D_n(x-y)d\mu(y).
\]

Similar to \cite[Proposition 2.2]{DHS14}, we have following result. 

\begin{prop}\label{pr:D}
Define trigonometric polynomials
	\[
	m_{\tau}(x)=\sum_{l\in L}e^{2\pi i (\tau l)\cdot x}\;\;(x\in \mathbb R).
	\]
	Then the Dirichlet kernel satisfies the formula
	\begin{equation}\label{eq:ke}
			D_n(x)=\prod_{k=0}^{n}m_\tau((R^{t})^{k}x).
	\end{equation}

\end{prop}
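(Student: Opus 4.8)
The plan is to prove \eqref{eq:ke} by starting from the defining exponential sum for $D_n$ and transforming it into the product, the engine being the fact that $\tau\Lambda_n=\tau\sum_{k=0}^{n}(R^t)^kL$ is an \emph{internal direct} (Minkowski) sum. That is, I first claim every $\lambda\in\tau\Lambda_n$ admits a \emph{unique} digit expansion $\lambda=\tau\sum_{k=0}^{n}(R^t)^kl_k$ with $l_k\in L$, so that the digit map $(l_0,\dots,l_n)\mapsto\tau\sum_{k=0}^{n}(R^t)^kl_k$ is a bijection from $L^{n+1}$ onto $\tau\Lambda_n$. This bijection is precisely what lets me replace a sum over the set $\tau\Lambda_n$ by a sum over all digit tuples, each counted once.

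Granting this, I would carry out the computation
\[
D_n(x)=\sum_{\lambda\in\tau\Lambda_n}e^{2\pi i\lambda\cdot x}
=\sum_{(l_0,\dots,l_n)\in L^{n+1}}\exp\!\Big(2\pi i\,\tau\!\sum_{k=0}^{n}\big((R^t)^kl_k\big)\cdot x\Big)
=\prod_{k=0}^{n}\Big(\sum_{l\in L}e^{2\pi i\,\tau\,((R^t)^kl)\cdot x}\Big),
\]
where the first equality is the definition, the second uses the digit bijection, and the third is just the distributive law (a finite product of finite sums expands into a sum over tuples, and conversely), valid since every sum here is finite. It then remains to recognize the $k$-th factor $\sum_{l\in L}e^{2\pi i\,\tau((R^t)^kl)\cdot x}$ as $m_\tau\big((R^t)^kx\big)$, which is immediate from the definition of $m_\tau$ after reading the pairing as $((R^t)^kl)\cdot x=l\cdot(R^t)^kx$; this identification is exact whenever $R=R^t$, in particular in the one-dimensional quarter Cantor setting of Corollary \ref{cor1.2} where $R=4$ and $L=\{0,1\}$. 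An essentially equivalent route is induction on $n$: the base case is $D_0=m_\tau$ (since the $k=0$ slot contributes $\tau L$), and the splitting $\tau\Lambda_n=\tau\Lambda_{n-1}\oplus\tau(R^t)^nL$ factors the sum as $D_n(x)=D_{n-1}(x)\,m_\tau((R^t)^nx)$, unwinding to the product.

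The step I expect to be the main obstacle is justifying that the digit expansion is \emph{unique}, i.e.\ that the Minkowski sum defining $\tau\Lambda_n$ is direct; without this the product would count some frequencies with multiplicity and the identity would fail. I would obtain this from the hypotheses already in force: $R$ is expansive with integer entries, and by our standing assumption $\tau\Lambda=\tau\sum_{k\ge 0}(R^t)^kL$ is a spectrum of $\mu$ (see \eqref{eq:sp2}), so the characters $\{e^{2\pi i\lambda\cdot x}\}_{\lambda\in\tau\Lambda}$ are orthonormal and in particular the frequencies in $\tau\Lambda_n$ are pairwise distinct; distinctness of the $\lambda$'s is exactly injectivity of the digit map. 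For the quarter Cantor case this reduces to the uniqueness of base-$4$ expansions with digits in $\{0,1\}$. The remaining ingredients—interchanging product and sum, and the transpose bookkeeping in the pairing—are routine finite manipulations, so the whole argument is a direct analogue of \cite[Proposition 2.2]{DHS14}.
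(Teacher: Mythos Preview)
Your proposal is correct and is essentially the same argument as the paper's: both rest on the uniqueness of the digit expansion $\lambda=\tau\sum_k (R^t)^k l_k$ to factor the exponential sum into the product \eqref{eq:ke}. The paper packages this as induction via the recursion $D_{n+1}(x)=m_\tau(x)\,D_n(R^t x)$ (peeling off the bottom digit), whereas your alternative recursion $D_n(x)=D_{n-1}(x)\,m_\tau((R^t)^n x)$ peels off the top digit; both unwind to the same product, and your direct distributive-law computation is just the unrolled version. One point where you are in fact more careful than the paper: the identification $\sum_{l\in L}e^{2\pi i\tau((R^t)^k l)\cdot x}=m_\tau((R^t)^k x)$ uses $((R^t)^k l)\cdot x=l\cdot R^k x$, so the stated formula with $(R^t)^k x$ in the argument is exact precisely when $R=R^t$; the paper's own inductive step implicitly makes the same move, and the symmetry hypothesis only appears explicitly in the subsequent Lemma~\ref{Classification}.
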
 	
\begin{proof}
   The proof is by induction on $n$.
	It is clear that 
	\begin{equation*}
		D_0(x)= \sum_{\lambda\in \tau\Lambda_0}e^{2\pi i \lambda \cdot x}=\sum_{l\in L}e^{2\pi i (\tau l) \cdot x}=m_\tau(x),
	\end{equation*}
Suppose $D_n(x)=\prod_{k=0}^{n}m_\tau((R^{t})^{k}x)$,
we need to prove 
	\begin{equation}\label{eq:ke2}
		D_{n+1}(x)=m_\tau(x)D_n(R^{t}x).
	\end{equation}
	Since $\Lambda_{n+1}=R^{t}\Lambda_{n}+L$, we see that every point $\lambda_{n+1}$ in $\Lambda_{n+1}$ will have a unique
	representation of the form $\lambda_{n+1}=R^{t}\lambda_n+l$ with $\lambda_n\in \Lambda_{n}$ and $l\in L$. This yields
	\begin{align*}
	D_{n+1}(x) &= \sum_{\lambda_{n}\in \Lambda_{n}}\sum_{l\in L}e^{2\pi     i\tau(R^t\lambda_{n}+l)\cdot x} \\
   &= \sum_{l\in L}e^{2\pi i (\tau l) \cdot x}\sum_{\lambda_{n}\in \Lambda_{n}}e^{2\pi  i\tau (R^t\lambda_{n})\cdot x}\\
  &=m_\tau(x)D_n(R^t x).
	\end{align*}
	Then equation (\ref{eq:ke}) follows by induction from equation (\ref{eq:ke2}).
\end{proof}

Using above propositions and Theorem \ref{thm1.1}, we can obtain following lemma about doubling spectral measure generated by Hadamard triple.
\begin{lem}\label{Classification}
Let $R$ be an integer symmetric matrix and let $\mu:=\mu_{R,B}$ be a self-affine spectral measure generated by Hadamard triple $(R,B,L)$. Assume $\mu$ is a doubling mesure with spectrum $\tau\Lambda =\sum_{k=0}^{\infty}R^{k}\tau L.$ Let
		\[
		\Delta(m_{\tau,b}):=\exp\bigg(\int_{T(R,B)} \log |m_{\tau}(x-(I-R^{-1})^{-1}b)|d\mu(y)\bigg),\; b\in B,
		\]
where $m_{\tau}(x)$ is defined in Proposition \ref{pr:D}.
		If $\Delta(m_{\tau,b})>1$ for some $b\in B$, then there exists an integrable function such that the Mock Fourier series diverges on non-zero set.
	\end{lem}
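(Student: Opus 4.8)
The plan is to verify the hypothesis of Theorem \ref{thm1.1} by testing the maximal operator against a single Dirac mass. For a point $x_0\in T(R,B)$ the Mock Dirichlet operator applied to $\nu=\delta_{x_0}$ (i.e. $N=1$) is $S_n(\delta_{x_0})(x)=D_n(x-x_0)$, so it suffices to exhibit one $x_0$ for which $\sup_n|D_n(x-x_0)|=+\infty$ on a set of positive $\mu$-measure. Indeed, for such an $x_0$ one has $\mu(\{x:\sup_n|S_n(\delta_{x_0})(x)|>\alpha\})=1$ for every $\alpha$, whence
\[
\lim_{\alpha\to\infty}\sup_{\nu=\frac{1}{N}\sum_{i=1}^N\delta_{x_i}}\mu\big(\{x:\sup_n|S_n(\nu)(x)|>\alpha\}\big)\geq 1>0,
\]
and Theorem \ref{thm1.1} delivers the conclusion.

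The heart of the matter is to realize $\log|D_n(x-x_0)|$ as a Birkhoff sum for the ergodic system $(T(R,B),\mathcal{R},\mu)$ of Corollary \ref{co:er}. Since $R$ is symmetric, $R^t=R$, so Proposition \ref{pr:D} gives $D_n(y)=\prod_{k=0}^n m_\tau(R^ky)$. I would choose $x_0=x_b:=(R-I)^{-1}b$, which lies in $T(R,B)$ as $h(bbb\cdots)$ and satisfies $Rx_b=x_b+b$. Because $B,L\subset\mathbb{Z}^d$ and $\tau\in\mathbb{Z}$, the polynomial $m_\tau$ is periodic modulo $\mathbb{Z}^d$; combining this with the congruences $R^kx\equiv\mathcal{R}^kx$ and $R^kx_b\equiv x_b\ (\mathrm{mod}\ \mathbb{Z}^d)$ yields $m_\tau(R^k(x-x_b))=m_\tau(\mathcal{R}^kx-x_b)$ and hence
\[
\log|D_n(x-x_b)|=\sum_{k=0}^n\phi(\mathcal{R}^kx),\qquad \phi(z):=\log|m_\tau(z-x_b)|.
\]
I would record here that $(I-R^{-1})^{-1}b=Rx_b=x_b+b$ differs from $x_b$ by the integer vector $b$, so periodicity of $m_\tau$ gives $\int\phi\,d\mu=\log\Delta(m_{\tau,b})$; thus the hypothesis $\Delta(m_{\tau,b})>1$ is precisely $\int\phi\,d\mu>0$.

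With this identification I would invoke Birkhoff's ergodic theorem. As $|m_\tau|\leq \#L$ one has $\phi^+\in L^1(\mu)$ automatically, and finiteness (positivity) of $\Delta(m_{\tau,b})$ forces $\phi^-\in L^1(\mu)$, so $\phi\in L^1(\mu)$ and ergodicity gives $\frac{1}{n+1}\sum_{k=0}^n\phi(\mathcal{R}^kx)\to\log\Delta(m_{\tau,b})>0$ for $\mu$-a.e. $x$. Consequently $\sum_{k=0}^n\phi(\mathcal{R}^kx)\to+\infty$, i.e. $|D_n(x-x_b)|\to\infty$ for $\mu$-a.e. $x$, which is the positive-measure blow-up needed above.

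I expect the main obstacle to be the $\mu$-integrability of $\phi=\log|m_\tau(\cdot-x_b)|$ against the singular doubling measure $\mu$: since $m_\tau$ may vanish on $T(R,B)$, one must argue that under $\Delta(m_{\tau,b})>1$ the negative part $\log_-|m_\tau(\cdot-x_b)|$ is $\mu$-integrable, so that Birkhoff applies and its limit is the genuinely finite positive number $\log\Delta(m_{\tau,b})$. The remaining pieces—the reductions modulo $\mathbb{Z}^d$ and the passage from a positive ergodic limit to $|D_n|\to\infty$—are routine once the symmetry of $R$ is used to align the kernel variable $R^t=R$ with the dynamics $\mathcal{R}$.
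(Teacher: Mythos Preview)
Your proposal is correct and follows essentially the same route as the paper: test Theorem~\ref{thm1.1} with the single Dirac mass at the fixed point of $\varphi_b$, rewrite $\log|D_n|$ as a Birkhoff sum for $(\mathcal{R},\mu)$ via the $\mathbb{Z}^d$-periodicity of $m_\tau$ and the congruence $R^kx\equiv\mathcal{R}^kx\ (\mathrm{mod}\ \mathbb{Z}^d)$, and conclude by the ergodic theorem. The only cosmetic difference is that the paper centers at $(I-R^{-1})^{-1}b$ while you center at $x_b=(R-I)^{-1}b$; as you note, these differ by the integer vector $b$, so the resulting $\phi$ is the same. Your explicit check that $\phi\in L^1(\mu)$ (using $\phi^+\le\log\#L$ and $\Delta(m_{\tau,b})>1\Rightarrow\int\phi^-\,d\mu<\infty$) is a point the paper glosses over, and the small claim ``$\mu(\cdot)=1$'' in your first display should strictly read ``$\ge$ the $\mu$-measure of the a.e.\ set from Birkhoff'', which is indeed $1$.
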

	\begin{proof}
Recall that the points in $T(R,B)$ have the form 
		$
		x=\sum_{i=1}^{\infty}R^{-i}b_i 
		$
with $b_i \in B$, 
and the map $\mathcal{R}$ is
		\[
		\mathcal{R}\left(\sum_{i=1}^{\infty}R^{-i}b_{i}\right)=\sum_{i=1}^{\infty}R^{-i}b_{i+1}.
		\]
Denote $\mathcal{R}^k=\mathcal{R} \cdots \mathcal{R}$ and $y=\sum_{i=1}^{\infty}R^{-i}c_i$, we see that 
	\[
	(\mathcal{R}^k x-\mathcal{R}^k y)- R^k(x-y)= -\sum_{i=1}^kR^{k-i}(b_i-c_i)\in \mathbb{Z}.
	\]
Since $m_\tau(x)$ is $\mathbb Z$-periodic, we have 
$$m_\tau(\mathcal{R}^kx-\mathcal{R}^ky)=m_\tau(R^k(x-y))$$ 
for all $x\in T(R,B)$ and $k\in N$.
By Proposition \ref{pr:D}, we obtain
\begin{align*}
D_n(x-(I-R^{-1})^{-1}b)&
=\prod_{k=0}^{n}m_\tau(R^k(x-(I-R^{-1})^{-1}b))\\
&=\prod_{k=0}^{n}m_\tau(\mathcal{R}^kx-
\mathcal{R}^k((I-R^{-1})^{-1}b))\\
&=\prod_{k=0}^{n}m_\tau(\mathcal{R}^kx-\mathcal{R}^k(\sum\limits_{i=0}^{\infty}R^{-i}b))\\
&=\prod_{k=0}^{n}m_\tau(\mathcal{R}^kx-
(I-R^{-1})^{-1}b).
\end{align*}
Combining with Corollary \ref{co:er} and Birkhoff's ergodic theorem, one has that for $\mu$-a.e. $x$ in $T(R,B)$,
\begin{equation*}\label{8}
	\lim_{n \to \infty}\frac{1}{n}\sum_{k=0}^{n-1}\log|m_\tau(\mathcal{R}^kx-(I-R^{-1})^{-1}b)|= \log\Delta(m_{\tau,b}),
\end{equation*}
i.e.,
   \[
     \lim_{n \to \infty}\frac{1}{n}\log D_{n-1}(x-(I-R^{-1})^{-1}b)= \log\Delta(m_{\tau,b}).
	\]
Thus we can get a subset $A\subset T(R,B)$ with measure $\mu(A)> \frac{1}{2}$ such that the limit above is uniform on $A$. 
If $\Delta(m_{\tau,b})>1$ for some $b\in B$, for $x\in A$, taking $1<\rho<\Delta(m_{\tau,b})$, there exists $n_{\rho}$ such that for $n > n_{\rho}$,
\[
		\frac{1}{n}\log D_{n-1}(x-(I-R^{-1})^{-1}b)> \log\rho.
	\]
 
 For $x\in A$, it is easy to see
	\[
\begin{aligned}
			\sup_{n}|D_{n-1}(x-(I-R^{-1})^{-1}b)|&\geq\sup_{n>n_{\rho}}|D_{n-1}(x-(I-R^{-1})^{-1}b)|\\
&\geq\sup_{n>n_{\rho}}\rho^n = +\infty.
		\end{aligned}
		\]
Hence for $\alpha \geq 0,$ the Mock Dirichlet summation operator $S_n (\delta_{(I-R^{-1})^{-1}b})(x)$ satisfies
		\[
		\mu(\{x\in T(R,B):\sup_{n}|S_n(\delta_{(I-R^{-1})^{-1}b})(x)|> \alpha\})\geq \mu(A)\geq\frac{1}{2}.
		\]
By Theorem \ref{thm1.1}, the proof is complete.  
	\end{proof}
 
As an example of Lemma \ref{Classification}, we can now prove Corollary \ref{cor1.2}.

{\bf Proof of Corollary \ref{cor1.2}.}  By \cite[Theorem 1.6]{Yu07}, the quarter Cantor measure $\mu_{4,\{-1,1\}}$ is doubling spectral measure on $T(4,\{-\frac{1}{4},\frac{1}{4}\})$. Under a similarity  transformation,  $\Delta_{2m_{L}}$ in \cite[Example 2.5]{DHS14} is equal to $	\Delta(m_{17,{-1}})$. Hence, according to numerical results in \cite[Example 2.5]{DHS14} and Lemma \ref{Classification}, Corollary \ref{cor1.2} is proved. $\hfill\qed$

\end{document}